\documentclass[lettersize,journal]{IEEEtran}
\usepackage{amsmath,amssymb,amsfonts}
\usepackage{array}
\usepackage[caption=false,font=normalsize,labelfont=sf,textfont=sf]{subfig}
\usepackage{textcomp}
\usepackage{stfloats}
\usepackage{url}
\usepackage{verbatim}
\usepackage{graphicx}
\usepackage{cite}
\usepackage{mathtools}
\DeclareMathOperator*{\argmin}{argmin}
\allowdisplaybreaks

\usepackage{amsthm}
\newtheorem{proposition}{Proposition}
\usepackage{algorithm, algpseudocode}
\usepackage{multirow}
\usepackage{eurosym}
\usepackage[usenames,dvipsnames]{color}

\begin{document}

\title{
Stochastic Virtual Power Plant Dispatch via Temporally Aggregated Distributed Predictive Control with Performance Guarantees
}

\author{
Luca Santosuosso,~\IEEEmembership{Member,~IEEE,}
Fei Teng,~\IEEEmembership{Senior Member,~IEEE,}
and Sonja Wogrin,~\IEEEmembership{Senior Member,~IEEE}
\thanks{
Luca Santosuosso and Sonja Wogrin are with the Institute of Electricity Economics and Energy Innovation, Graz University of Technology, and the Research Center ENERGETIC, 8010 Graz, Austria (e-mail: luca.santosuosso@tugraz.at; wogrin@tugraz.at).

Fei Teng is with the Department of Electrical and Electronic Engineering, Imperial College London, SW7 2BU London, U.K. (e-mail: f.teng@imperial.ac.uk).}}

\markboth{}%
{Shell \MakeLowercase{\textit{et al.}}: A Sample Article Using IEEEtran.cls for IEEE Journals}

\maketitle

\begin{abstract}
This paper addresses the energy dispatch of a virtual power plant comprising renewable generation, energy storage, and thermal units under uncertainty in renewable output, energy prices, and energy demand. The nonlinear dynamics and multiple sources of uncertainty render traditional stochastic model predictive control (MPC) computationally intractable as the dispatch horizon, scenario set, and asset portfolio expand. To overcome this limitation, we propose a novel controller that seamlessly integrates MPC with time series aggregation and distributed optimization, simultaneously reducing the temporal, asset, and scenario dimensions of the problem. The resulting controller provides a rigorous performance guarantee through theoretically validated bounds on its approximation error, while leveraging dual information from previous MPC iterations to adaptively optimize the temporal aggregation. Numerical results show that the proposed controller reduces runtime by over 50\% relative to traditional stochastic MPC and, crucially, restores tractability where the full-scale dispatch model proves intractable.
\end{abstract}

\begin{IEEEkeywords}
Stochastic model predictive control, distributed model predictive control, time series aggregation, performance guarantees, virtual power plant dispatch
\end{IEEEkeywords}

\section{Introduction}
\subsection{Literature Review}
\IEEEPARstart{T}{he} global pursuit of carbon neutrality has accelerated the widespread integration of distributed energy resources (DERs),
including variable renewable energy sources (vRES), energy storage systems, and flexible loads, into modern power systems.
In this context, virtual power plants (VPPs) provide a structured framework for aggregating large populations of DERs into a unified portfolio
and coordinating their operation as a single entity \cite{bahloul2024residential},
enabling efficient participation in energy trading \cite{wei2023virtual} and the provision of grid services \cite{10443545}.

Although the potential of VPPs to enhance the flexibility of modern power systems is widely recognized \cite{naval2021virtual},
their operation remains a complex, multi-stage process encompassing long-term planning, short-term scheduling, and real-time dispatch \cite{naughton2021co}.
This paper focuses on the latter stage of this decision-making process.

The real-time energy dispatch of a VPP is commonly formulated as an optimal control problem \cite{bolzoni2020optimal},
in which operations are controlled at sub-hourly resolution to optimize a performance criterion, such as minimizing generation costs, while satisfying the technical constraints of DERs.
Model predictive control (MPC) has emerged as a widely adopted approach for this task, owing to its capacity to anticipate system dynamics, explicitly enforce constraints, and iteratively update control actions based on real-time measurements \cite{rosewater2019risk}.

As VPP operations are affected by multiple sources of uncertainty,
such as energy demand, vRES generation, and market prices,
previous research has focused on incorporating methods to handle stochasticity within MPC schemes.
Traditional deterministic MPC assumes perfect knowledge over the optimization horizon \cite{santosuosso2024stochastic}, an assumption that is often unrealistic.
To address this limitation, alternative control schemes have been investigated:
scenario-based stochastic MPC \cite{he2021stochastic}, which employs scenarios to represent potential uncertainty realizations;
robust MPC \cite{zhao2022distributed}, which presumes that uncertainty is confined within a predefined set;
and chance-constrained MPC \cite{shen2023stochastic}, which enforces probabilistic guarantees on constraint satisfaction under stochastic variations.
A comprehensive review of these methods is provided in \cite{Roald}.

Constraining uncertainty to a finite set, as in robust MPC, is often impractical for unbounded sources such as market prices and may result in excessively conservative solutions \cite{zhang2021robust}.
Chance-constrained MPC typically requires explicit reformulations to enforce probabilistic constraints, which often entail significant mathematical complexity and may also introduce conservatism.
Scenario-based stochastic MPC is therefore typically preferred due to its flexibility and its ability to represent diverse uncertainty sources.
However, as the size and heterogeneity of the VPP portfolio increase, the underlying uncertainty space expands accordingly,
potentially requiring a prohibitively large number of scenarios and rendering the MPC scheme computationally intractable.

Notably, beyond increasing with the number of scenarios,
i.e., the \textit{scenario dimension} of the control problem,
the computational complexity of stochastic MPC also grows with the number and heterogeneity of DERs within the VPP, 
i.e., its \textit{asset dimension} \cite{han2023distributed}.
Ensuring simultaneous scalability with respect to both dimensions is therefore essential to enable fast real-time dispatch of complex, large-scale VPPs \cite{santosuosso2025distributed}.

To this end, prior research has investigated mathematical decomposition methods
for deriving distributed MPC schemes that partition the centralized control problem into concurrently solved subproblems,
while preserving convergence to global optimality \cite{contreras2022distributed}.
Within this line of work, various distributed MPC schemes have been proposed \cite{chen2025real}, 
including those based on dual decomposition \cite{contreras2022distributed}, Douglas–Rachford splitting \cite{halvgaard2016distributed}, optimality condition decomposition (OCD) \cite{zhu2014decomposed}, analytical target cascading (ATC) \cite{zhao2022distributed}, approximate Newton direction methods \cite{baker2016distributed},
the alternating direction method of multipliers (ADMM) \cite{rezaei2022distributed},
as well as learning-based accelerated variants of these methods \cite{rajaei2025learning}.
Among these, ADMM and its variants \cite{Boyd} have gained widespread adoption due to their flexibility in handling diverse problem formulations and their well-established convergence properties \cite{feng2023update}.

Notably, while several studies have distributed computations across either the scenario \cite{lopez2018stochastic} or the asset \cite{jia2025admm} dimension of VPP optimization models independently,
few have addressed both simultaneously \cite{santosuosso2025scenario}.
Crucially, none of these studies addresses scalability across the third complexity dimension of the problem, namely its \textit{temporal dimension}, 
thereby leaving substantial potential for reducing the computational burden of MPC schemes unexploited.
This occurs because temporal decomposition decouples the intertemporal constraints of the original model, yielding subproblems that are inherently more challenging to coordinate toward convergence than those resulting from decompositions across assets or scenarios \cite{zhu2014decomposed}.

To alleviate the computational burden associated with the temporal dimension of energy optimization models,
time series aggregation (TSA) methods have emerged as a highly effective alternative to decomposition methods. 
By compacting the full-scale optimization model, TSA produces an aggregated model defined over a reduced set of representative time periods (or clusters),
while aiming to preserve the essential temporal dynamics of the original formulation \cite{teichgraeber2022time}.
Unlike decomposition methods, which aim at reducing computational complexity via parallelization, TSA seeks the same objective through a dimensionality reduction of the original optimization model.

Traditional \textit{a priori} TSA methods typically employ standard clustering techniques,
such as k-means \cite{zhang2020novel}, k-medoids \cite{schutz2018comparison}, and hierarchical clustering \cite{liu2017hierarchical},
to construct temporally aggregated models based solely on the statistical features of the input time series.
More recently, \textit{a posteriori} TSA has emerged as an alternative paradigm \cite{hilbers2023reducing},
augmenting the clustering procedure with features derived from the optimization model itself, such as insights from previous optimization runs,
to yield aggregations that more accurately capture the temporal dynamics of the original full-scale optimization model \cite{zhang2022model}.
Remarkably, a posteriori TSA has the potential to achieve \textbf{exact} temporal aggregation \cite{wogrin2023time},
yielding an aggregated model that faithfully replicates the output of its full-scale counterpart while delivering substantial computational savings.

While many studies have proposed TSA methods for large-scale energy optimization models \cite{sun2019data},
particularly for those involving intertemporal constraints that complicate the TSA procedure by requiring the preservation of temporal chronology \cite{pineda2018chronological},
research on integrating TSA within MPC schemes remains scarce \cite{deml2015role}.
Notably, none of the existing studies investigates the potential of emerging a posteriori TSA methods to reduce the temporal dimension in stochastic MPC.

Furthermore, since TSA yields an aggregated model that approximates its full-scale counterpart, it is essential in practical applications to provide \textit{performance guarantees} by quantifying the approximation error introduced through temporal aggregation \cite{santosuosso2025optimal}.
While formal guarantees have been established for linear programming (LP) \cite{teichgraeber2019clustering}, mixed-integer linear (MILP) and mixed-integer quadratic programming (MIQP) problems \cite{santosuosso2025we},
these theoretical results are not readily extendable to quadratic programming (QP) and quadratically constrained quadratic programming (QCQP) formulations,
which are widely employed to model typical nonlinear characteristics of VPPs \cite{zare2026systematic}, including quadratic cost functions \cite{babaei2019data} and quadratic emission constraints \cite{shui2024optimal}.

\subsection{Research Gaps and Contributions}

\begin{table*}[!t]
\caption{Comparison of this study with the relevant related literature}
\label{tab:literature_comparison}
\centering
\setlength{\tabcolsep}{5pt}
\begin{tabular}{|c|c|c|c|c|c|c|}
\hline
\textbf{Ref.} & \textbf{MPC} & \textbf{Problem} & \textbf{Decomposition} & \textbf{Distributed method} & \textbf{TSA} & \textbf{Performance-guaranteed}\\
\hline
\cite{he2021stochastic} & \checkmark (stochastic) & QP & $\times$ & $-$ & $\times$ & $\checkmark$\\
\cite{shen2023stochastic} & \checkmark (chance-constrained) & QP & $\times$ & $-$ & $\times$ & $\checkmark$\\
\cite{contreras2022distributed} & \checkmark (deterministic) & QP & Asset & Dual decomposition & $\times$ & $\checkmark$\\
\cite{zhao2022distributed} & \checkmark (robust) & MILP & Asset & ATC & $\times$ & $\times$\\
\cite{rezaei2022distributed} & \checkmark (stochastic) & MIQP & Asset & ADMM & $\times$ & $\times$\\
\cite{zhu2014decomposed} & \checkmark (stochastic) & QP & Scenario \& temporal & OCD & $\times$ & $\checkmark$\\
\cite{liu2017hierarchical} & $\times$ & LP & $\times$ & $-$ & \checkmark (a priori) & $\times$\\
\cite{zhang2022model} & $\times$ & MILP & $\times$ & $-$ & \checkmark (a posteriori) & $\times$\\
\cite{teichgraeber2019clustering} & $\times$ & LP & $\times$ & $-$ & \checkmark (a priori) & \checkmark\\
\cite{santosuosso2025we} & $\times$ & MILP \& MIQP & $\times$ & $-$ & \checkmark (a priori) & \checkmark\\
\cite{deml2015role} & \checkmark (deterministic) & MIQP & $\times$ & $-$ & \checkmark (a priori) & $\times$\\
This paper & \checkmark (stochastic) & QCQP & Asset \& scenario & ADMM & \checkmark (a posteriori) & \checkmark\\
\hline
\end{tabular}
\end{table*}

The literature review reveals the following research gaps:
\begin{itemize}
    \item Prior studies show that decomposition methods effectively enhance the scalability of energy dispatch problems along the scenario and asset dimensions \cite{contreras2022distributed,halvgaard2016distributed,zhu2014decomposed,zhao2022distributed,baker2016distributed,rezaei2022distributed},
    while TSA, particularly a posteriori TSA, has proven effective in reducing the temporal dimension without compromising solution accuracy \cite{zhang2020novel,schutz2018comparison,liu2017hierarchical,zhang2022model,wogrin2023time}.
    However, the systematic integration of these two complementary methodologies within MPC schemes remains largely unexplored, leaving significant potential for further reducing the computational complexity of real-time controllers.
    
    \item Existing research on performance guarantees for TSA methods has primarily focused on establishing bounds on the approximation error incurred by aggregated optimization models relative to their full-scale counterparts.
    To date, such guarantees have been derived only for LP \cite{teichgraeber2019clustering}, MILP, and MIQP problems \cite{santosuosso2025we},
    typically by constructing aggregated models in which the decision variables are averaged over the representative time periods.
    However, for QCQP problems commonly encountered in VPP dispatch \cite{naughton2021co},
    the simultaneous presence of quadratic constraints, time-varying parameters, and intertemporal constraints (e.g., due to storage and ramping) renders this averaging approach insufficient: as discussed in Section~\ref{sec:Methodology}, the previously established bounds no longer hold.
\end{itemize}
This paper seeks to address these research gaps through the following \textbf{key contributions}:
\begin{itemize}
    \item We formulate the energy dispatch of a VPP as a QCQP problem and propose a stochastic MPC scheme for its real-time solution.
    The stochastic MPC scheme is decomposed along both its asset and scenario dimensions using consensus ADMM, and temporally aggregated via TSA.
    This integrated approach, featuring a novel combination of MPC, TSA, and distributed optimization, simultaneously addresses all complexity dimensions of the dispatch problem, thereby maximizing computational efficiency.
    \item We demonstrate that the derived controller always provides a lower bound on the optimal objective function value of its full-scale counterpart,
    even in the presence of quadratic constraints, time-varying parameters, and intertemporal constraints such as storage and ramping.
    \item Leveraging the solution of the temporally aggregated and distributed controller,
    we propose an algorithm to systematically compute an upper bound on the optimal objective function value of the full-scale dispatch problem.
    The gap between the upper and lower bounds quantifies the approximation error incurred at each MPC iteration, thereby providing a rigorous performance guarantee.
    \item Although the derived bounds hold independently of the clustering technique employed for TSA,
    we show that incorporating dual information from the dispatch model into the clustering features yields a temporally aggregated controller that \textbf{exactly} reproduces the full-scale solution.
    Since such dual information is not available \emph{ex ante}, we propose a closed-loop, a posteriori TSA method leveraging past MPC iterations to estimate it.
\end{itemize}
Table~\ref{tab:literature_comparison} compares this study with the relevant related literature.

The remainder of the paper is organized as follows: Section~\ref{sec:Methodology} details the proposed methodology, Section~\ref{sec:Results} presents the results, and Section~\ref{sec:Conclusion} concludes the study.

\section{Methodology}
\label{sec:Methodology}
In the following, sets and vectors are denoted in boldface (e.g., $\boldsymbol{z}$).
The cardinality of a set is denoted by $|\cdot|$, while $\|\cdot\|_2$ denotes the Euclidean ($\ell_2$) norm.
The maximum and minimum elements of a set $\boldsymbol{\Phi}$ are denoted by $\max(\boldsymbol{\Phi})$ and $\min(\boldsymbol{\Phi})$, respectively.
The zero vector is denoted by $\mathbf{0}$. All sets are indexed starting from $0$.

\subsection{Overview of the Proposed Methodology}
In this section, we present the proposed methodology for real-time VPP dispatch under uncertainty.
The VPP comprises vRES, namely wind and solar, a set of energy storage systems $\boldsymbol{N}$ indexed by $n$, and a set of thermal power plants $\boldsymbol{G}$ indexed by $g$.
The dispatch problem is subject to various uncertainties, namely vRES generation, energy demand, and energy prices, captured by a set of scenarios $\boldsymbol{\Omega}$ indexed by $\omega$.
While vRES are non-dispatchable, storage and thermal units can be dispatched to maximize profit while meeting the VPP internal demand,
with any imbalances accommodated via non-supplied energy.
The VPP modeling is detailed in Subsection~\ref{subsec:Modeling}.

The dispatch problem is addressed using a stochastic MPC scheme, as discussed in Subsection~\ref{subsec:Full-Scale}.
At each time step $t \in \boldsymbol{T}$, the MPC scheme is solved over a prediction horizon $\boldsymbol{K}$, indexed by $k$, with cardinality $K \coloneqq |\boldsymbol{K}|$ and sampling time $\Delta$. 
Following the standard MPC paradigm, at each time step $t$, only the first control action (corresponding to $k=0$) is applied, after which the horizon is advanced by one time step.
To streamline the notation, we use the subscript $k$ to denote variables and parameters associated with time $t+k$, i.e., the $k$-th prediction step of the MPC iteration executed at time $t$.

Our goal is to optimize the real-time energy dispatch of a VPP while simultaneously reducing the computational complexity of the problem across its asset, scenario, and temporal dimensions.
To achieve this, we propose a novel integration of MPC, a posteriori TSA, and distributed optimization. 
Fig.~\ref{fig:distributed_MPC_TSA} presents an overview of the proposed methodology.

The a posteriori TSA module exploits dual information from the previous MPC iteration to aggregate the full-scale model across $\boldsymbol{R}$ representative time periods, indexed by $r$.
The resulting temporally aggregated model is then simultaneously decomposed across assets and scenarios, yielding the proposed \textbf{temporally aggregated and distributed stochastic MPC scheme}, as detailed in Subsection~\ref{subsec:AggregatedDistributed}.
The temporally aggregated stochastic MPC scheme and a posteriori TSA method are detailed in Subsections~\ref{subsec:Aggregated} and \ref{subsec:APosterioriTSA}, respectively.

As illustrated in Fig.~\ref{fig:distributed_MPC_TSA}, the controller delivers a rigorous \textbf{performance guarantee} via iteratively refined upper and lower bounds on the full-scale optimal objective function value.
The gap between these bounds enables the decision-maker to assess the controller performance without requiring the full-scale model solution, as discussed in Subsection~\ref{subsec:TheoreticalResult}.

\begin{figure*}[t]
  \centering
  \includegraphics[width=\textwidth]{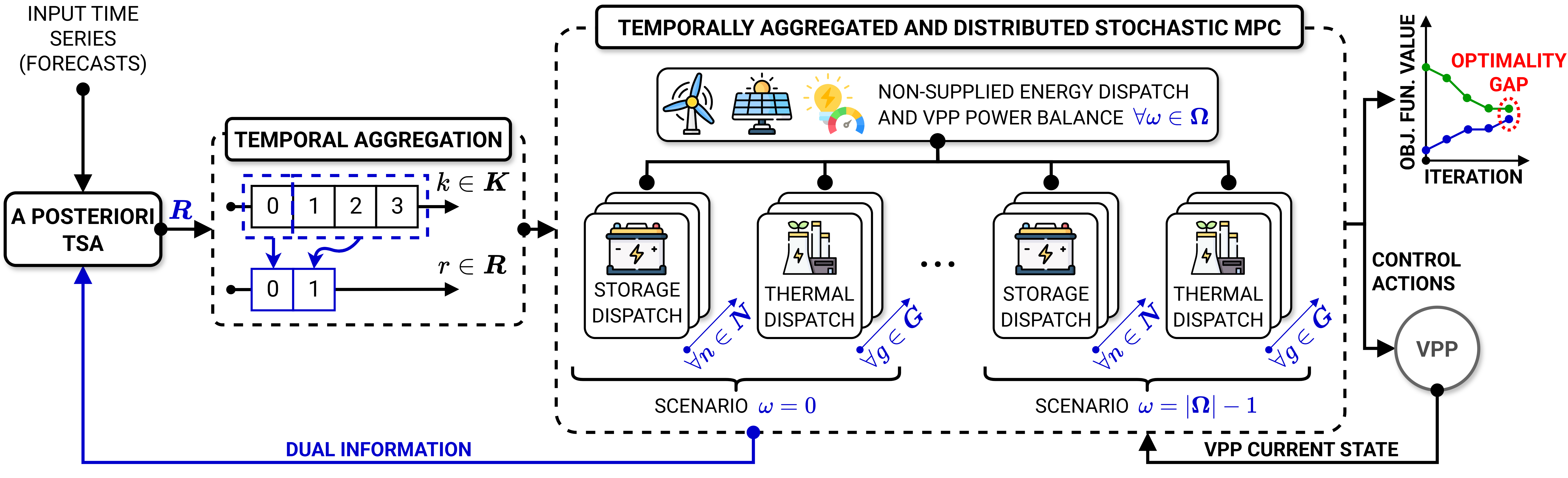}
  \caption{Illustration of the proposed methodology for real-time energy dispatch of a VPP under uncertainty.}
  \label{fig:distributed_MPC_TSA}
\end{figure*}

\subsection{Modeling}
\label{subsec:Modeling}

The VPP dispatch is modeled as follows.
For each scenario $\omega$ and time step $k$, the forecasts of vRES generation, demand, and energy prices are denoted by $P^{\mathrm{vRES}}_{\omega,k}$, $D_{\omega,k}$, and $\pi_{\omega,k}$, respectively.
The energy output of the VPP in scenario $\omega$ at time $k$ is denoted by $e_{\omega,k}$, while $e^{\mathrm{ns}}_{\omega,k}$ is the non-supplied energy demand.
For each storage unit $n$, the state of charge at time $k$ in scenario $\omega$ is denoted by $e^{\mathrm{s}}_{n,\omega,k}$,
with charging and discharging powers denoted by $p^{\mathrm{c}}_{n,\omega,k}$ and $p^{\mathrm{d}}_{n,\omega,k}$, respectively.
The maximum charging and discharging power limits are denoted by $\overline{P}^{\mathrm{c}}_n$ and $\overline{P}^{\mathrm{d}}_n$, respectively.
Charging and discharging efficiencies are denoted by $\eta^{\mathrm{c}}_n$ and $\eta^{\mathrm{d}}_n$.
The initial state of charge is $E^{\mathrm{s,0}}_{n}$,
while the lower and upper bounds on the state of charge are $\underline{E}^{\mathrm{s}}_n$ and $\overline{E}^{\mathrm{s}}_n$, respectively.

The dynamics of each energy storage system are given by
\begin{align}
    e^{\mathrm{s}}_{n,\omega,k+1} & = e^{\mathrm{s}}_{n,\omega,k} + \left(\eta^{\mathrm{c}}_n p^{\mathrm{c}}_{n,\omega,k} - \eta^{\mathrm{d}}_n p^{\mathrm{d}}_{n,\omega,k}\right) \Delta, \nonumber\\
    & \qquad\qquad\qquad\qquad \forall n, \forall \omega, \forall k \in \boldsymbol{K} \setminus \{K-1\}, \label{model:storage_soc}\\
    e^{\mathrm{s}}_{n,\omega,0} & = E^{\mathrm{s,0}}_{n}, \; \forall n, \forall \omega, \label{model:storage_soc_init}\\
    \underline{E}^{\mathrm{s}}_n & \leq e^{\mathrm{s}}_{n,\omega,k} \leq \overline{E}^{\mathrm{s}}_n, \; \forall n, \forall \omega, \forall k. \label{model:storage_soc_bounds}
\end{align}
The charging and discharging powers are constrained by
\begin{align}
    0 & \leq p^\mathrm{c}_{n,\omega,k} \leq \overline{P}^\mathrm{c}_n, \; \forall n, \forall \omega, \forall k, \label{model:storage_char_bounds}\\
    0 & \leq p^\mathrm{d}_{n,\omega,k} \leq \overline{P}^\mathrm{d}_n, \; \forall n, \forall \omega, \forall k. \label{model:storage_dischar_bounds}
\end{align}

The power output of each thermal unit $g$, denoted $p^{\mathrm{th}}_{g,\omega,k}$, is constrained between its lower ($\underline{P}^\mathrm{th}_g$) and upper ($\overline{P}^\mathrm{th}_g$) limits as
\begin{equation}
    \underline{P}^\mathrm{th}_g \leq p^\mathrm{th}_{g,\omega,k} \leq \overline{P}^\mathrm{th}_g, \; \forall g, \forall \omega, \forall k. \label{model:thr_bounds}
\end{equation}
Moreover, the thermal power generation of the VPP is subject to the following quadratic emission constraints \cite{moghimi2013stochastic}:
\begin{equation}
    \alpha_g \left(p^\mathrm{th}_{g,\omega,k}\right)^2 + \beta_g \, p^\mathrm{th}_{g,\omega,k} \leq L^{\mathrm{CO}_2}_g, \; \forall g, \forall \omega, \forall k, \label{model:thr_emission}
\end{equation}
where $L^{\mathrm{CO}_2}_g$ is the emission limit of generator $g$,
while $\alpha_g$ and $\beta_g$ are the coefficients mapping its output to emissions.

Each thermal unit is subject to its ramp limit $R^\mathrm{th}_g$ as
\begin{align}
    p^\mathrm{th}_{g,\omega,k+1} - p^\mathrm{th}_{g,\omega,k} & \leq R^\mathrm{th}_g, \; \forall g, \forall \omega, \forall k \in \boldsymbol{K}\setminus\{K-1\}, \label{model:thr_ramp1}\\
    p^\mathrm{th}_{g,\omega,k} - p^\mathrm{th}_{g,\omega,k+1} & \leq R^\mathrm{th}_g, \; \forall g, \forall \omega, \forall k \in \boldsymbol{K}\setminus\{K-1\}. \label{model:thr_ramp2}
\end{align}

\subsection{Full-Scale Controller}
\label{subsec:Full-Scale}

We formulate the VPP energy dispatch as a stochastic MPC problem.
Let $\boldsymbol{u}_{0}$ denote the vector of control actions computed for $k\! = \!0$.
We define the set of decision variables as $\boldsymbol{z} \!\coloneqq\! \big\{ \boldsymbol{u}_{0},$ $e_{\omega,k}, e^{\mathrm{ns}}_{\omega,k}, e^{\mathrm{s}}_{n,\omega,k}, p^\mathrm{th}_{g,\omega,k}, p^\mathrm{c}_{n,\omega,k}, p^\mathrm{d}_{n,\omega,k} \big\}_{n \in \boldsymbol{N}, g \in \boldsymbol{G}, \omega \in \boldsymbol{\Omega}, k \in \boldsymbol{K}}$.

Let $C^{\mathrm{c}}_n$ and $C^{\mathrm{d}}_n$ denote the charging and discharging costs of storage unit $n$, respectively,
and let $C^{\mathrm{th}}_g$ represent the generation cost of thermal unit $g$.
The penalty associated with non-supplied energy is denoted by $C^{\mathrm{ns}}$.
Furthermore, a reference state $E^{\mathrm{ref}}_n$ is specified for each storage unit $n$;
deviations from this reference are penalized by $C^{\mathrm{ref}}_n$.

Let $\phi_\omega$ denote the probability associated with scenario $\omega \in \boldsymbol{\Omega}$.
The goal of the stochastic dispatch problem is to minimize the objective function $F(\boldsymbol{z})$, defined as
\begin{align}
\label{FS_SMPC:obj}
F(\boldsymbol{z}) \coloneqq &
\sum_{\omega \in \boldsymbol{\Omega}} \phi_{\omega} \sum_{k \in \boldsymbol{K}} \left(
- \pi_{\omega,k} \, e_{\omega,k} + C^{\mathrm{ns}} e^{\mathrm{ns}}_{\omega,k} \right) \nonumber\\
& + \sum_{\omega \in \boldsymbol{\Omega}} \phi_{\omega} \sum_{g \in \boldsymbol{G}} \sum_{k \in \boldsymbol{K}} C^{\mathrm{th}}_g p^\mathrm{th}_{g,\omega,k} \Delta \nonumber\\
& + \sum_{\omega \in \boldsymbol{\Omega}} \phi_{\omega} \sum_{n \in \boldsymbol{N}} \sum_{k \in \boldsymbol{K}} \left(C^{\mathrm{c}}_n p^\mathrm{c}_{n,\omega,k} + C^{\mathrm{d}}_n p^\mathrm{d}_{n,\omega,k}\right) \Delta \nonumber\\
& + \sum_{\omega \in \boldsymbol{\Omega}} \phi_{\omega} \sum_{n \in \boldsymbol{N}} \sum_{k \in \boldsymbol{K}} C^{\mathrm{ref}}_n \left(e^{\mathrm{s}}_{n,\omega,k} - E^{\mathrm{ref}}_n\right)^2.
\end{align}
In \eqref{FS_SMPC:obj}, the linear terms represent the expected economic performance of the VPP by accounting for market revenues, thermal generation costs, storage charging and discharging costs, and penalties for non-supplied energy,
whereas the quadratic term penalizes expected deviations of the storage states from their reference values.

At each time $t \in \boldsymbol{T}$, the \textbf{full-scale stochastic MPC scheme} solves the following QCQP problem over the horizon $\boldsymbol{K}$:
\begin{subequations}
\label{FS_SMPC}
\begin{align}
\min_{\boldsymbol{z}} \quad & F\left(\boldsymbol{z}\right)\\
\textrm{s.t.} \quad & e_{\omega, k} = P^{\mathrm{vRES}}_{\omega,k} \, \Delta - D_{\omega,k} + \sum_{n \in \boldsymbol{N}} \left(p^{\mathrm{d}}_{n,\omega,k} - p^{\mathrm{c}}_{n,\omega,k} \right) \Delta \nonumber\\
& \qquad\quad + \sum_{g \in \boldsymbol{G}} p^\mathrm{th}_{g,\omega,k} \, \Delta + e^{\mathrm{ns}}_{\omega,k}, \; \forall \omega, \forall k, \label{FS_SMPC:bal}\\
& \eqref{model:storage_soc}-\eqref{model:thr_ramp2}, \nonumber\\
& \left[ e^{\mathrm{ns}}_{\omega,0}, p^\mathrm{th}_{g,\omega,0}, p^{\mathrm{c}}_{n,\omega,0}, p^{\mathrm{d}}_{n,\omega,0} \right] = \boldsymbol{u}_{0}, \; \forall n, \forall g, \forall \omega, \label{FS_SMPC:non_ant}\\
& \boldsymbol{z} \geq \boldsymbol{0}. \label{FS_SMPC:pos_vars}
\end{align}
\end{subequations}
In \eqref{FS_SMPC}, the constraints \eqref{FS_SMPC:bal} enforce the energy balance within the VPP, 
while \eqref{FS_SMPC:non_ant} are the non-anticipativity constraints.

\subsection{Temporally Aggregated Controller}
\label{subsec:Aggregated}

To alleviate computational  burden, we employ TSA to construct a temporally aggregated counterpart of \eqref{FS_SMPC}, defined over $\boldsymbol{R}$ representative time periods, with $R \coloneqq |\boldsymbol{R}|$.
When $R \ll K$, solving the aggregated model in place of its full-scale counterpart yields a significant computational advantage.

Let $\boldsymbol{K}_r \subseteq \boldsymbol{K}$ denote the set of time steps $k$ assigned to cluster $r \in \boldsymbol{R}$, with cardinality $K_r \coloneqq |\boldsymbol{K}_r|$.
We collect the decision variables of the aggregated model in $\boldsymbol{\bar{z}} \coloneqq \{ \boldsymbol{\bar{u}}_{0}, $ $\bar{e}_{\omega,r}, \bar{e}^{\mathrm{ns}}_{\omega,r}, \bar{e}^{\mathrm{s}}_{n,\omega,r}, \bar{p}^\mathrm{th}_{g,\omega,r}, \bar{p}^\mathrm{c}_{n,\omega,r}, \bar{p}^\mathrm{d}_{n,\omega,r} \}_{n \in \boldsymbol{N}, g \in \boldsymbol{G}, \omega \in \boldsymbol{\Omega}, r \in \boldsymbol{R}}$.

We define the aggregated counterpart of $F(\boldsymbol{z})$ in \eqref{FS_SMPC:obj} as
\begin{align}
\label{AGG_SMPC:obj}
\bar{F}(\boldsymbol{\bar{z}}) \coloneqq &
\sum_{\omega \in \boldsymbol{\Omega}} \phi_{\omega} \sum_{r \in \boldsymbol{R}} K_r \left(
- \max_{k \in \boldsymbol{K}_r}\left\{\pi_{\omega,k}\right\} \, \bar{e}_{\omega,r} + C^{\mathrm{ns}} \bar{e}^{\mathrm{ns}}_{\omega,r} \right) \nonumber\\
& + \sum_{\omega \in \boldsymbol{\Omega}} \phi_{\omega} \sum_{g \in \boldsymbol{G}} \sum_{r \in \boldsymbol{R}} K_r C^{\mathrm{th}}_g \bar{p}^\mathrm{th}_{g,\omega,r} \Delta \nonumber\\
& + \sum_{\omega \in \boldsymbol{\Omega}} \phi_{\omega} \sum_{n \in \boldsymbol{N}} \sum_{r \in \boldsymbol{R}} K_r \left( C^{\mathrm{c}}_n \bar{p}^\mathrm{c}_{n,\omega,r} + C^{\mathrm{d}}_n \bar{p}^\mathrm{d}_{n,\omega,r} \right) \Delta \nonumber\\
& + \sum_{\omega \in \boldsymbol{\Omega}} \phi_{\omega} \sum_{n \in \boldsymbol{N}} \sum_{r \in \boldsymbol{R}} C^{\mathrm{ref}}_n \left(\bar{e}^{\mathrm{s}}_{n,\omega,r} - E^{\mathrm{ref}}_n\right)^2.
\end{align}

At each time $t \in \boldsymbol{T}$, the \textbf{temporally aggregated stochastic MPC scheme} solves the following QCQP problem over $\boldsymbol{R}$:
\begin{subequations}
\label{AGG_SMPC}
\begin{align}
\min_{\boldsymbol{\bar{z}}} \quad & \bar{F}\left(\boldsymbol{\bar{z}}\right)\\
\textrm{s.t.} \quad & \bar{e}_{\omega, r} = \sum_{k \in \boldsymbol{K}_r} \! \frac{P^{\mathrm{vRES}}_{\omega,k} \Delta - D_{\omega,k}}{K_r} + \sum_{g \in \boldsymbol{G}} \bar{p}^\mathrm{th}_{g,\omega,r} \Delta + \bar{e}^{\mathrm{ns}}_{\omega,r} \nonumber\\
& \qquad\quad + \sum_{n \in \boldsymbol{N}} \left(\bar{p}^{\mathrm{d}}_{n,\omega,r} - \bar{p}^{\mathrm{c}}_{n,\omega,r} \right) \Delta, \; \forall \omega, \forall r, \label{AGG_SMPC:bal}\\
& \bar{e}^{\mathrm{s}}_{n,\omega,r+1} = \bar{e}^{\mathrm{s}}_{n,\omega,r} + K_r \left(\eta^{\mathrm{c}}_n \bar{p}^{\mathrm{c}}_{n,\omega,r} - \eta^{\mathrm{d}}_n \bar{p}^{\mathrm{d}}_{n,\omega,r}\right) \Delta, \nonumber\\
& \qquad\qquad\qquad\qquad\; \forall n, \forall \omega, \forall r \in \boldsymbol{R} \setminus \{R-1\}, \label{AGG_SMPC:storage_soc}\\
& \bar{e}^{\mathrm{s}}_{n,\omega,0} = E^{\mathrm{s,0}}_{n}, \; \forall n, \forall \omega, \label{AGG_SMPC:storage_soc_init}\\
& \underline{E}^{\mathrm{s}}_n \leq \bar{e}^{\mathrm{s}}_{n,\omega,r} \leq \overline{E}^{\mathrm{s}}_n, \; \forall n, \forall \omega, \forall r, \label{AGG_SMPC:storage_soc_bounds}\\
& 0 \leq \bar{p}^\mathrm{c}_{n,\omega,r} \leq \overline{P}^\mathrm{c}_n, \; \forall n, \forall \omega, \forall r, \label{AGG_SMPC:storage_char_bounds}\\
& 0 \leq \bar{p}^\mathrm{d}_{n,\omega,r} \leq \overline{P}^\mathrm{d}_n, \; \forall n, \forall \omega, \forall r, \label{AGG_SMPC:storage_dischar_bounds}\\
& \underline{P}^\mathrm{th}_g \leq \bar{p}^\mathrm{th}_{g,\omega,r} \leq \overline{P}^\mathrm{th}_g, \; \forall g, \forall \omega, \forall r, \label{AGG_SMPC:thr_bounds}\\
& \alpha_g K_r \left(\bar{p}^\mathrm{th}_{g,\omega,r}\right)^2 - \alpha_g (K_r - 1) \left(\overline{P}^\mathrm{th}_g\right)^2 \nonumber\\
& \qquad\qquad\qquad + \beta_g \, \bar{p}^\mathrm{th}_{g,\omega,r} \leq L^{\mathrm{CO}_2}_g, \; \forall g, \forall \omega, \forall r, \label{AGG_SMPC:thr_emission}\\
& \bar{p}^\mathrm{th}_{g,\omega,r+1} - \bar{p}^\mathrm{th}_{g,\omega,r} \, K_r \leq R^\mathrm{th}_g + R^\mathrm{th}_g \, \frac{K_{r+1} - 1}{2}, \nonumber\\
& \qquad\qquad\qquad\qquad\; \forall g, \forall \omega, \forall r \in \boldsymbol{R} \setminus \{R-1\}, \label{AGG_SMPC:thr_ramp1}\\
& \bar{p}^\mathrm{th}_{g,\omega,r} - \bar{p}^\mathrm{th}_{g,\omega,r+1} \, K_{r+1} \leq R^\mathrm{th}_g + R^\mathrm{th}_g \, \frac{K_r - 1}{2}, \nonumber\\
& \qquad\qquad\qquad\qquad\; \forall g, \forall \omega, \forall r \in \boldsymbol{R} \setminus \{R-1\}, \label{AGG_SMPC:thr_ramp2}\\
& \left[ \bar{e}^{\mathrm{ns}}_{\omega,0}, \bar{p}^\mathrm{th}_{g,\omega,0}, \bar{p}^{\mathrm{c}}_{n,\omega,0}, \bar{p}^{\mathrm{d}}_{n,\omega,0} \right] = \boldsymbol{\bar{u}}_{0}, \; \forall n, \forall g, \forall \omega, \label{AGG_SMPC:non_ant}\\
& \boldsymbol{\bar{z}} \geq \boldsymbol{0}. \label{AGG_SMPC:pos_vars}
\end{align}
\end{subequations}
Here, \eqref{AGG_SMPC:bal}, \eqref{AGG_SMPC:storage_soc}--\eqref{AGG_SMPC:thr_ramp2}, \eqref{AGG_SMPC:non_ant}, and \eqref{AGG_SMPC:pos_vars} are the aggregated counterparts of \eqref{FS_SMPC:bal}, \eqref{model:storage_soc}--\eqref{model:thr_ramp2}, \eqref{FS_SMPC:non_ant}, and \eqref{FS_SMPC:pos_vars}, respectively.

Notably, whereas conventional TSA methods average the input time series over each cluster \cite{teichgraeber2022time},
we introduce a novel formulation that instead considers the cluster-wise maximum price in \eqref{AGG_SMPC:obj}
and augments the quadratic constraints \eqref{AGG_SMPC:thr_emission} and the intertemporal constraints \eqref{AGG_SMPC:storage_soc}, \eqref{AGG_SMPC:thr_ramp1}, and \eqref{AGG_SMPC:thr_ramp2} with cluster-specific terms.
In particular, compared with their original full-scale formulation, the aggregated emission constraints \eqref{AGG_SMPC:thr_emission} are augmented with cluster-specific correction terms, $- \alpha_g (K_r - 1) \left(\overline{P}^\mathrm{th}_g\right)^2$, to compensate for the loss of visibility into intra-cluster variations caused by the temporal aggregation.
These correction terms ensure that the image of any full-scale feasible trajectory under the temporal mapping is contained within the aggregated feasible domain, as formally established in Proposition~\ref{prop:main_result}.

\subsection{Closed-Loop A Posteriori Time Series Aggregation}
\label{subsec:APosterioriTSA}

Once the temporally aggregated model \eqref{AGG_SMPC} is constructed,
a clustering technique is required to extract representative time periods from the original prediction horizon $\boldsymbol{K}$.
Owing to the presence of intertemporal constraints in \eqref{FS_SMPC}, conventional clustering techniques (e.g., k-means) cannot be directly employed, as they disregard temporal chronology.
To address this issue, we employ a \emph{sliding window clustering} technique.

Let $\{\boldsymbol{a}_k\}_{k \in \boldsymbol{K}}$ be the set of clustering features used for TSA.
This technique evaluates the similarity between consecutive elements of $\{\boldsymbol{a}_k\}$, assigning $k$ to the current cluster $r$ if
\begin{equation}
\label{TSACondition}
\|\boldsymbol{a}_k - \boldsymbol{\mu}_r\|_2 \leq \zeta,
\end{equation}
where $\zeta$ is a user-defined similarity threshold and $\boldsymbol{\mu}_r$ denotes the centroid of cluster $r$, computed as
$\boldsymbol{\mu}_r \coloneqq \frac{1}{K_r} \sum_{k \in \boldsymbol{K}_r} \boldsymbol{a}_k$.
If condition \eqref{TSACondition} is not satisfied for $\boldsymbol{a}_k$, the current cluster is closed and a new cluster is initialized starting at $k$.

Fig.~\ref{fig:sliding_window_clustering} illustrates the sliding window clustering technique.
Starting from an initial cluster (e.g., containing $k-1$ and $k$), the cluster is extended to include $k+1$ only if condition \eqref{TSACondition} is satisfied for $\boldsymbol{a}_{k+1}$, thereby ensuring temporal coherence.

\begin{figure}[t]
    \centering
    \includegraphics[scale=0.08]{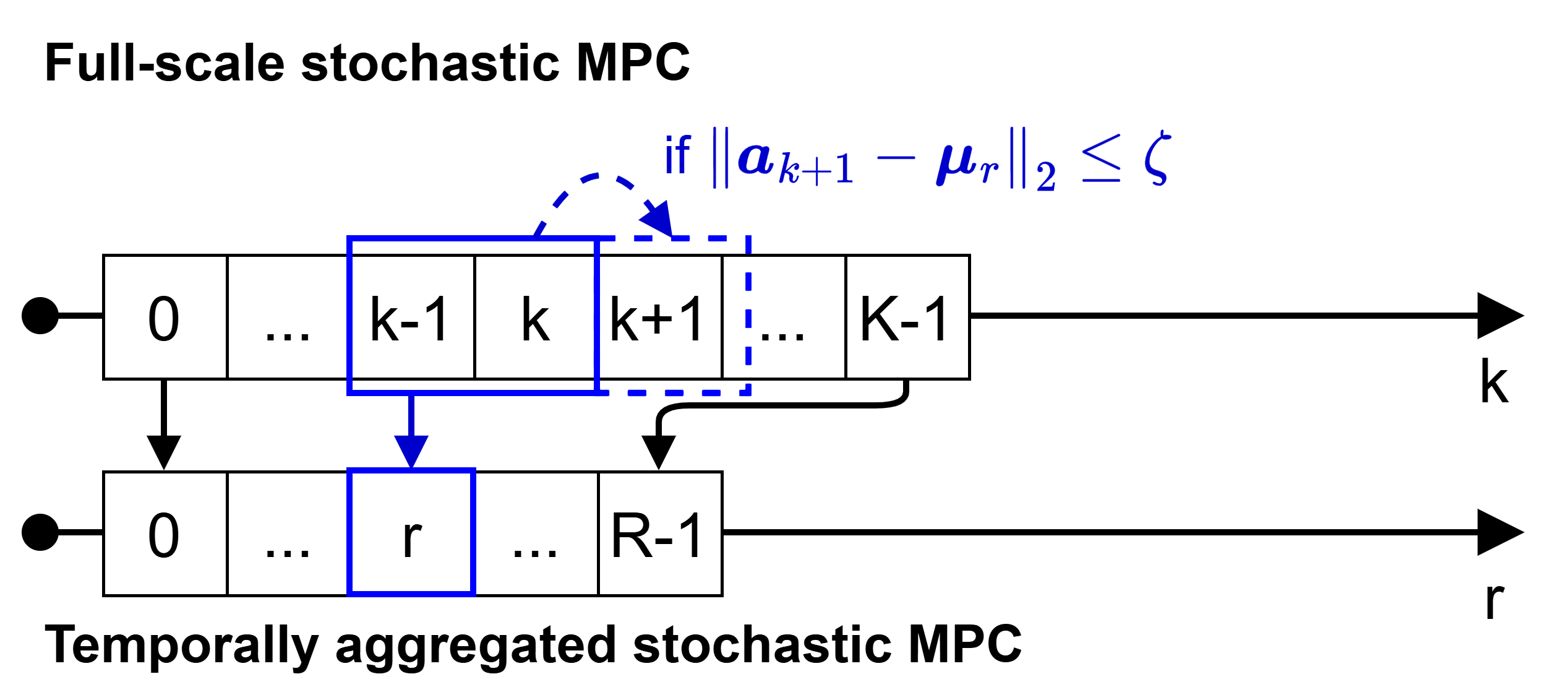}
    \caption{Illustration of the sliding window clustering technique.}
    \label{fig:sliding_window_clustering}
\end{figure}

The features used for TSA directly affect the fidelity of the aggregated model.
In particular, \cite{wogrin2023time} demonstrates that if the active constraints of the full-scale model were known,
an aggregated model could be constructed that exactly reproduces the optimal objective function value of the full-scale model.

Although such information is unavailable \textit{ex ante}, MPC offers a unique opportunity to exploit it retrospectively,
as the same optimization problem is repeatedly solved over overlapping horizons, differing only by the newly appended terminal time step and the set of scenarios considered at each MPC iteration.
Consequently, the outputs from previous MPC runs can inform TSA in a closed-loop manner.

Motivated by this insight, we propose a closed-loop a posteriori TSA method integrated with MPC.
At each iteration $t$, the dual information from the MPC problem solved at time $t-1$ is used as a feature in the sliding window clustering technique to identify the representative periods $\boldsymbol{R}$ (see Fig.~\ref{fig:distributed_MPC_TSA}).
Specifically, we employ the scenario-averaged marginal costs computed at the previous MPC iteration as estimates of the true marginal costs of the dispatch problem,
i.e., the dual variables associated with the energy balance constraints \eqref{FS_SMPC:bal}, which, as shown in Section \ref{sec:Results}, serve as an accurate proxy for identifying the active constraints of \eqref{FS_SMPC}.
Two singleton clusters are enforced at the first and last time steps of $\boldsymbol{K}$,
reflecting the absence of dual information at the terminal step and the critical role of the initial step, whose control actions are implemented at time $t$.
By exploiting the dual information from the previous MPC iteration, our method estimates the active constraints of \eqref{FS_SMPC}, directly targeting exact temporal aggregation, which is typically unattainable with traditional a priori TSA methods.

\subsection{Temporally Aggregated and Distributed Controller}
\label{subsec:AggregatedDistributed}

To further reduce the computational complexity arising from the asset and scenario dimensions, we decompose the temporally aggregated model \eqref{AGG_SMPC} via consensus ADMM \cite{Boyd}.

The aggregated model \eqref{AGG_SMPC} features three sets of coupling constraints: 
intertemporal coupling induced by the storage \eqref{AGG_SMPC:storage_soc} and ramping \eqref{AGG_SMPC:thr_ramp1}–\eqref{AGG_SMPC:thr_ramp2} constraints,
asset coupling arising from the energy balance constraints \eqref{AGG_SMPC:bal},
and scenario coupling induced by the non-anticipativity constraints \eqref{AGG_SMPC:non_ant}. 
All coupling constraints in \eqref{AGG_SMPC} involve three sets of \textit{global variables}: 
$\boldsymbol{\bar{p}}^{\boldsymbol{\mathrm{c}}}_{n,\omega} \coloneqq \left[\bar{p}^{\mathrm{c}}_{n,\omega,0}, ..., \bar{p}^{\mathrm{c}}_{n,\omega,r}, ..., \bar{p}^{\mathrm{c}}_{n,\omega,R-1}\right]^\top$, 
$\boldsymbol{\bar{p}}^{\boldsymbol{\mathrm{d}}}_{n,\omega} \coloneqq \left[\bar{p}^{\mathrm{d}}_{n,\omega,0}, ..., \bar{p}^{\mathrm{d}}_{n,\omega,r}, ..., \bar{p}^{\mathrm{d}}_{n,\omega,R-1}\right]^\top$, 
and
$\boldsymbol{\bar{p}}^{\boldsymbol{\mathrm{th}}}_{g,\omega} \coloneqq \left[\bar{p}^{\mathrm{th}}_{g,\omega,0}, ..., \bar{p}^{\mathrm{th}}_{g,\omega,r}, ..., \bar{p}^{\mathrm{th}}_{g,\omega,R-1}\right]^\top$.
We relax the coupling constraints in \eqref{AGG_SMPC} by introducing copies of these global variables.

Let
{\small $\left\{\boldsymbol{\hat{p}}^{\boldsymbol{\mathrm{c}}}_{n,\omega}, \boldsymbol{\hat{p}}^{\boldsymbol{\mathrm{d}}}_{n,\omega}, \boldsymbol{\hat{p}}^{\boldsymbol{\mathrm{th}}}_{g,\omega}\right\}$}
and
{\small $\left\{\boldsymbol{\tilde{p}}^{\boldsymbol{\mathrm{c}}}_{n,\omega}, \boldsymbol{\tilde{p}}^{\boldsymbol{\mathrm{d}}}_{n,\omega}, \boldsymbol{\tilde{p}}^{\boldsymbol{\mathrm{th}}}_{g,\omega}\right\}$}
denote two distinct sets of copies of the global variables in \eqref{AGG_SMPC}.
By introducing these copies, we reformulate the temporally aggregated model \eqref{AGG_SMPC} as the following consensus problem:
\begin{subequations}
\label{eq:consensus_ADMM_prob}
\begin{align}
\min_{\boldsymbol{\bar{z}}} \quad & \bar{F}(\boldsymbol{\bar{z}}) \\
\text{s.t.} \quad & \boldsymbol{z}^{\boldsymbol{\mathrm{b}}} \in \boldsymbol{\Xi}(\boldsymbol{\beta}), \label{eq:CADMM_bal}\\
& \boldsymbol{z}^{\boldsymbol{\mathrm{s}}}_{n,\omega} \in \boldsymbol{\Gamma}^{\boldsymbol{\mathrm{s}}}_{n,\omega}\left(\boldsymbol{\theta}^{\boldsymbol{\mathrm{s}}}_{n,\omega}\right), \; \forall n, \forall \omega, \label{eq:CADMM_storage}\\
& \boldsymbol{\tilde{p}}^{\boldsymbol{\mathrm{th}}}_{g,\omega} \in \boldsymbol{\Gamma}^{\boldsymbol{\mathrm{th}}}_{g,\omega}\left(\boldsymbol{\theta}^{\boldsymbol{\mathrm{th}}}_{g,\omega}\right), \; \forall g, \forall \omega, \label{eq:CADMM_thermal}\\
& \boldsymbol{\bar{p}}^{\boldsymbol{\mathrm{s}}}_{n,\omega} = \boldsymbol{\hat{p}}^{\boldsymbol{\mathrm{s}}}_{n,\omega} \; : \; \boldsymbol{\hat{\lambda}}^{\boldsymbol{\mathrm{s}}}_{n,\omega}, \; \forall n, \forall \omega, \label{eq:CADMM_consensus1}\\
& \boldsymbol{\bar{p}}^{\boldsymbol{\mathrm{s}}}_{n,\omega} = \boldsymbol{\tilde{p}}^{\boldsymbol{\mathrm{s}}}_{n,\omega} \; : \; \boldsymbol{\tilde{\lambda}}^{\boldsymbol{\mathrm{s}}}_{n,\omega}, \; \forall n, \forall \omega, \label{eq:CADMM_consensus2}\\
& \boldsymbol{\bar{p}}^{\boldsymbol{\mathrm{th}}}_{g,\omega} = \boldsymbol{\hat{p}}^{\boldsymbol{\mathrm{th}}}_{g,\omega} \; : \; \boldsymbol{\hat{\lambda}}^{\boldsymbol{\mathrm{th}}}_{g,\omega}, \; \forall g, \forall \omega, \label{eq:CADMM_consensus3}\\
& \boldsymbol{\bar{p}}^{\boldsymbol{\mathrm{th}}}_{g,\omega} = \boldsymbol{\tilde{p}}^{\boldsymbol{\mathrm{th}}}_{g,\omega} \; : \; \boldsymbol{\tilde{\lambda}}^{\boldsymbol{\mathrm{th}}}_{g,\omega}, \; \forall g, \forall \omega, \label{eq:CADMM_consensus4}
\end{align}
\end{subequations}
where
$\boldsymbol{\hat{\lambda}}^{\boldsymbol{\mathrm{s}}}_{n,\omega} \!\coloneqq\! \left[\boldsymbol{\hat{\lambda}}^{\boldsymbol{\mathrm{c}}}_{n,\omega}, \boldsymbol{\hat{\lambda}}^{\boldsymbol{\mathrm{d}}}_{n,\omega}\right]^\top$,
$\boldsymbol{\tilde{\lambda}}^{\boldsymbol{\mathrm{s}}}_{n,\omega} \!\coloneqq\! \left[\boldsymbol{\tilde{\lambda}}^{\boldsymbol{\mathrm{c}}}_{n,\omega}, \boldsymbol{\tilde{\lambda}}^{\boldsymbol{\mathrm{d}}}_{n,\omega}\right]^\top$,
$\boldsymbol{\tilde{p}}^{\boldsymbol{\mathrm{s}}}_{n,\omega} \!\!\coloneqq\! [\boldsymbol{\tilde{p}}^{\boldsymbol{\mathrm{c}}}_{n,\omega}, \boldsymbol{\tilde{p}}^{\boldsymbol{\mathrm{d}}}_{n,\omega}]^\top$,
$\boldsymbol{\hat{p}}^{\boldsymbol{\mathrm{s}}}_{n,\omega} \!\!\coloneqq\! [\boldsymbol{\hat{p}}^{\boldsymbol{\mathrm{c}}}_{n,\omega}, \boldsymbol{\hat{p}}^{\boldsymbol{\mathrm{d}}}_{n,\omega}]^\top$,
$\boldsymbol{\bar{p}}^{\boldsymbol{\mathrm{s}}}_{n,\omega} \!\!\coloneqq\! [\boldsymbol{\bar{p}}^{\boldsymbol{\mathrm{c}}}_{n,\omega}, \boldsymbol{\bar{p}}^{\boldsymbol{\mathrm{d}}}_{n,\omega}]^\top$,
$\boldsymbol{z}^{\boldsymbol{\mathrm{b}}} \coloneqq \big\{\boldsymbol{\bar{u}}_{0}, \bar{e}_{\omega,r}, \bar{e}^{\mathrm{ns}}_{\omega,r}, \boldsymbol{\hat{p}}^{\boldsymbol{\mathrm{c}}}_{n,\omega}, \boldsymbol{\hat{p}}^{\boldsymbol{\mathrm{d}}}_{n,\omega}, \boldsymbol{\hat{p}}^{\boldsymbol{\mathrm{th}}}_{g,\omega}\big\}_{n \in \boldsymbol{N}, g \in \boldsymbol{G}, \omega \in \boldsymbol{\Omega}, r \in \boldsymbol{R}}$,
and
$\boldsymbol{z}^{\boldsymbol{\mathrm{s}}}_{n,\omega} \coloneqq \big\{\bar{e}^{\mathrm{s}}_{n,\omega,r}, \boldsymbol{\tilde{p}}^{\boldsymbol{\mathrm{c}}}_{n,\omega}, \boldsymbol{\tilde{p}}^{\boldsymbol{\mathrm{d}}}_{n,\omega}\big\}_{r \in \boldsymbol{R}}$.

This reformulation introduces three sets of temporally aggregated subproblems:
\begin{itemize}
    \item Subproblem \eqref{STEPI_1}, corresponding to \eqref{eq:CADMM_bal}, with local variables
    $\boldsymbol{z}^{\boldsymbol{\mathrm{b}}}$, and feasible set $\boldsymbol{\Xi}(\boldsymbol{\beta})$, defined by \eqref{AGG_SMPC:bal}, \eqref{AGG_SMPC:storage_char_bounds}--\eqref{AGG_SMPC:thr_bounds}, \eqref{AGG_SMPC:non_ant} and \eqref{AGG_SMPC:pos_vars}, with parameters $\boldsymbol{\beta}$.
    \item $|\boldsymbol{N}| \times |\boldsymbol{\Omega}|$ subproblems \eqref{STEPI_2}, corresponding to \eqref{eq:CADMM_storage}, with local variables $\boldsymbol{z}^{\boldsymbol{\mathrm{s}}}_{n,\omega}$, and feasible set $\boldsymbol{\Gamma}^{\boldsymbol{\mathrm{s}}}_{n,\omega}\left(\boldsymbol{\theta}^{\boldsymbol{\mathrm{s}}}_{n,\omega}\right)$, defined by \eqref{AGG_SMPC:storage_soc}--\eqref{AGG_SMPC:storage_dischar_bounds} and \eqref{AGG_SMPC:pos_vars}, with parameters $\boldsymbol{\theta}^{\boldsymbol{\mathrm{s}}}_{n,\omega}$.
    \item $|\boldsymbol{G}| \times |\boldsymbol{\Omega}|$ subproblems \eqref{STEPI_3}, corresponding to \eqref{eq:CADMM_thermal}, with local variables $\boldsymbol{\tilde{p}}^{\boldsymbol{\mathrm{th}}}_{g,\omega}$ and feasible set $\boldsymbol{\Gamma}^{\boldsymbol{\mathrm{th}}}_{g,\omega}(\boldsymbol{\theta}^{\boldsymbol{\mathrm{th}}}_{g,\omega})$ defined by \eqref{AGG_SMPC:thr_bounds}--\eqref{AGG_SMPC:thr_ramp2} and \eqref{AGG_SMPC:pos_vars}, with parameters $\boldsymbol{\theta}^{\boldsymbol{\mathrm{th}}}_{g,\omega}$.
\end{itemize}
Consistency among the local variables is enforced by the consensus constraints \eqref{eq:CADMM_consensus1}--\eqref{eq:CADMM_consensus4}, with associated dual variables
$\boldsymbol{\hat{\lambda}}^{\boldsymbol{\mathrm{s}}}_{n,\omega}$,
$\boldsymbol{\tilde{\lambda}}^{\boldsymbol{\mathrm{s}}}_{n,\omega}$,
$\boldsymbol{\hat{\lambda}}^{\boldsymbol{\mathrm{th}}}_{g,\omega}$,
and 
$\boldsymbol{\tilde{\lambda}}^{\boldsymbol{\mathrm{th}}}_{g,\omega}$.
Separately for each scenario, the subproblems \eqref{STEPI_2} and \eqref{STEPI_3} determine the dispatch of each storage and thermal unit, respectively, while \eqref{STEPI_1} enforces the VPP energy balance.
The derived $(|\boldsymbol{N}| + |\boldsymbol{G}|)\times|\boldsymbol{\Omega}| + 1$ subproblems are solved \textbf{in parallel} via consensus ADMM.

Let $\rho$ be the ADMM step size. At time $t$, our \textbf{temporally aggregated and distributed stochastic MPC scheme} executes the following steps over $i \!\in\! \boldsymbol{I}$ iterations, with $I \!\coloneqq\! |\boldsymbol{I}|$.

\noindent\textbf{Step I}. Local primal variable update:
\begin{align}
\label{STEPI_1}
    \boldsymbol{z}^{\boldsymbol{\mathrm{b}}^{i+1}} \!\!\! \coloneqq & \argmin_{\boldsymbol{z}^{\boldsymbol{\mathrm{b}}} \in \boldsymbol{\Xi}} \Bigg\{ \! \sum_{\omega \in \boldsymbol{\Omega}} \phi_{\omega}  \sum_{r \in \boldsymbol{R}} K_r \left( - \max_{k \in \boldsymbol{K}_r}\left\{\pi_{\omega,k}\right\} \bar{e}_{\omega,r} \! \right) \nonumber\\
    & + \sum_{\omega \in \boldsymbol{\Omega}} \phi_{\omega} \sum_{r \in \boldsymbol{R}} K_r \, C^{\mathrm{ns}} \, \bar{e}^{\mathrm{ns}}_{\omega,r} \nonumber\\
    & + \!\! \sum_{n \in \boldsymbol{N}} \sum_{\omega \in \boldsymbol{\Omega}} \!\! \left( \!\! \left( \boldsymbol{\hat{\lambda}}^{\boldsymbol{\mathrm{s}}^i}_{n,\omega} \right)^{\!\!\top} \!\! \boldsymbol{\hat{p}}^{\boldsymbol{\mathrm{s}}}_{n,\omega} \!+\! \frac{\rho}{2} \! \left\|\boldsymbol{\hat{p}}^{\boldsymbol{\mathrm{s}}}_{n,\omega} \!-\! \boldsymbol{\bar{p}}^{\boldsymbol{\mathrm{s}}^i}_{n,\omega}\right\|_2^2 \right) \nonumber\\
    & + \!\! \sum_{g \in \boldsymbol{G}} \sum_{\omega \in \boldsymbol{\Omega}} \!\! \left( \!\! \left( \boldsymbol{\hat{\lambda}}^{\boldsymbol{\mathrm{th}}^i}_{g,\omega} \right)^{\!\!\top} \!\! \boldsymbol{\hat{p}}^{\boldsymbol{\mathrm{th}}}_{g,\omega} \!+\! \frac{\rho}{2} \! \left\|\boldsymbol{\hat{p}}^{\boldsymbol{\mathrm{th}}}_{g,\omega} \!-\! \boldsymbol{\bar{p}}^{\boldsymbol{\mathrm{th}}^i}_{g,\omega}\right\|_2^2 \!\right) \!\!\! \Bigg\},
\end{align}
\begin{align}
\label{STEPI_2}
    \boldsymbol{z}^{\boldsymbol{\mathrm{s}}^{i+1}}_{n,\omega} \coloneqq & \argmin_{\boldsymbol{z}^{\boldsymbol{\mathrm{s}}}_{n,\omega} \in \boldsymbol{\Gamma}^{\boldsymbol{\mathrm{s}}}_{n,\omega}} 
    \Bigg\{ \phi_{\omega} \sum_{r \in \boldsymbol{R}} K_r \left( C^{\mathrm{c}}_n \tilde{p}^\mathrm{c}_{n,\omega,r} + C^{\mathrm{d}}_n \tilde{p}^\mathrm{d}_{n,\omega,r} \right) \Delta \nonumber\\
    & + \phi_{\omega} \sum_{r \in \boldsymbol{R}} C^{\mathrm{ref}}_n \left(\bar{e}^{\mathrm{s}}_{n,\omega,r} - E^{\mathrm{ref}}_n\right)^2 + \left( \boldsymbol{\tilde{\lambda}}^{\boldsymbol{\mathrm{s}}^i}_{n,\omega} \right)^{\!\!\top} \! \boldsymbol{\tilde{p}}^{\boldsymbol{\mathrm{s}}}_{n,\omega} \nonumber\\
    & + \frac{\rho}{2} \left\|\boldsymbol{\tilde{p}}^{\boldsymbol{\mathrm{s}}}_{n,\omega} - \boldsymbol{\bar{p}}^{\boldsymbol{\mathrm{s}}^i}_{n,\omega}\right\|_2^2 \Bigg\}, \; \forall n, \forall \omega,
\end{align}
\begin{align}
\label{STEPI_3}
    \boldsymbol{\tilde{p}}^{\boldsymbol{\mathrm{th}}^{i+1}}_{g,\omega} \coloneqq & \argmin_{\boldsymbol{\tilde{p}}^{\boldsymbol{\mathrm{th}}}_{g,\omega} \in  \boldsymbol{\Gamma}^{\boldsymbol{\mathrm{th}}}_{g,\omega}} \Bigg\{
    \phi_{\omega} \sum_{r \in \boldsymbol{R}} C^{\mathrm{th}}_g \tilde{p}^\mathrm{th}_{g,\omega,r} K_r \Delta + \left( \boldsymbol{\tilde{\lambda}}^{\boldsymbol{\mathrm{th}}^i}_{g,\omega} \right)^{\!\!\top} \! \boldsymbol{\tilde{p}}^{\boldsymbol{\mathrm{th}}}_{g,\omega} \nonumber\\
    & + \frac{\rho}{2} \left\|\boldsymbol{\tilde{p}}^{\boldsymbol{\mathrm{th}}}_{g,\omega} - \boldsymbol{\bar{p}}^{\boldsymbol{\mathrm{th}}^i}_{g,\omega}\right\|_2^2 \Bigg\}, \; \forall g, \forall \omega.
\end{align}

\noindent\textbf{Step II}. Global primal variable update:
\begin{align}
    \boldsymbol{\bar{p}}^{\boldsymbol{\mathrm{s}}^{i+1}}_{n,\omega} & \coloneqq \frac{1}{2} \left(\boldsymbol{\hat{p}}^{\boldsymbol{\mathrm{s}}^{i+1}}_{n,\omega} + \boldsymbol{\tilde{p}}^{\boldsymbol{\mathrm{s}}^{i+1}}_{n,\omega}\right), \; \forall n, \forall \omega, \label{STEPII_1}\\
    \boldsymbol{\bar{p}}^{\boldsymbol{\mathrm{th}}^{i+1}}_{g,\omega} & \coloneqq \frac{1}{2} \left(\boldsymbol{\hat{p}}^{\boldsymbol{\mathrm{th}}^{i+1}}_{g,\omega} + \boldsymbol{\tilde{p}}^{\boldsymbol{\mathrm{th}}^{i+1}}_{g,\omega}\right), \; \forall g, \forall \omega. \label{STEPII_2}
\end{align}

\noindent\textbf{Step III}. Dual variable update:
\begin{align}
    \boldsymbol{\hat{\lambda}}^{\boldsymbol{\mathrm{s}}^{i+1}}_{n,\omega} & \coloneqq \boldsymbol{\hat{\lambda}}^{\boldsymbol{\mathrm{s}}^{i}}_{n,\omega} + \rho \left(\boldsymbol{\hat{p}}^{\boldsymbol{\mathrm{s}}^{i+1}}_{n,\omega} - \boldsymbol{\bar{p}}^{\boldsymbol{\mathrm{s}}^{i+1}}_{n,\omega}\right), \; \forall n, \forall \omega, \label{STEPIII_1}\\
    \boldsymbol{\tilde{\lambda}}^{\boldsymbol{\mathrm{s}}^{i+1}}_{n,\omega} & \coloneqq \boldsymbol{\tilde{\lambda}}^{\boldsymbol{\mathrm{s}}^{i}}_{n,\omega} + \rho \left(\boldsymbol{\tilde{p}}^{\boldsymbol{\mathrm{s}}^{i+1}}_{n,\omega} - \boldsymbol{\bar{p}}^{\boldsymbol{\mathrm{s}}^{i+1}}_{n,\omega}\right), \; \forall n, \forall \omega, \label{STEPIII_2}\\
    \boldsymbol{\hat{\lambda}}^{\boldsymbol{\mathrm{th}}^{i+1}}_{g,\omega} & \coloneqq \boldsymbol{\hat{\lambda}}^{\boldsymbol{\mathrm{th}}^{i}}_{g,\omega} + \rho \left(\boldsymbol{\hat{p}}^{\boldsymbol{\mathrm{th}}^{i+1}}_{g,\omega} - \boldsymbol{\bar{p}}^{\boldsymbol{\mathrm{th}}^{i+1}}_{g,\omega}\right), \; \forall g, \forall \omega, \label{STEPIII_3}\\
    \boldsymbol{\tilde{\lambda}}^{\boldsymbol{\mathrm{th}}^{i+1}}_{g,\omega} & \coloneqq \boldsymbol{\tilde{\lambda}}^{\boldsymbol{\mathrm{th}}^{i}}_{g,\omega} + \rho \left(\boldsymbol{\tilde{p}}^{\boldsymbol{\mathrm{th}}^{i+1}}_{g,\omega} - \boldsymbol{\bar{p}}^{\boldsymbol{\mathrm{th}}^{i+1}}_{g,\omega}\right), \; \forall g, \forall \omega. \label{STEPIII_4}
\end{align}
To enhance convergence, we adopt the residual-based update rule for $\rho$, initialized at $\rho^{0}$, as proposed in \cite{Boyd}.

In the following, we adopt a synchronous implementation of the consensus ADMM procedure, whereby all distributed subproblems associated with iteration $i$ are solved in parallel before proceeding to iteration $i+1$.
Nevertheless, we emphasize that the proposed methodology is not inherently tied to a synchronous implementation and can be readily extended to an asynchronous ADMM implementation \cite{wang2022asynchronous}.

\subsection{Derivation of the Objective Function Bounds}
\label{subsec:TheoreticalResult}

In Subsection~\ref{subsec:AggregatedDistributed}, the aggregated model \eqref{AGG_SMPC} is decomposed using ADMM.
Since \eqref{AGG_SMPC} is convex and strong duality holds, as can be readily established by verifying Slater's condition, ADMM is guaranteed to converge to the optimal objective function value of \eqref{AGG_SMPC} \cite{Boyd}.
However, \eqref{AGG_SMPC} is an approximation of the original full-scale model \eqref{FS_SMPC}.
To establish a formal performance guarantee, we bound the resulting approximation error as follows.

\begin{proposition}\label{prop:main_result}
Let $\boldsymbol{z}$ be a feasible solution to the full-scale model \eqref{FS_SMPC}.
Let $\boldsymbol{\bar{z}}$ be derived from $\boldsymbol{z}$ as

\vspace{-0.15cm}
\begin{minipage}{0.45\columnwidth}
\begin{equation}
\label{prop:main_result_eq1}
\bar{e}_{\omega,r} \coloneqq \sum_{k \in \boldsymbol{K}_r} \frac{e_{\omega,k}}{K_r},
\end{equation}
\end{minipage}%
\hfill
\begin{minipage}{0.45\columnwidth}
\begin{equation}
\label{prop:main_result_eq2}
\bar{e}^{\mathrm{ns}}_{\omega,r} \coloneqq \sum_{k \in \boldsymbol{K}_r} \frac{e^{\mathrm{ns}}_{\omega,k}}{K_r},
\end{equation}
\end{minipage}

\vspace{-0.15cm}
\begin{minipage}{0.45\columnwidth}
\begin{equation}
\label{prop:main_result_eq3}
\bar{e}^{\mathrm{s}}_{n,\omega,r} \!\!\coloneqq\! e^{\mathrm{s}}_{n,\omega,\min(\!\boldsymbol{K}_r\!)},
\end{equation}
\end{minipage}%
\hfill
\begin{minipage}{0.45\columnwidth}
\begin{equation}
\label{prop:main_result_eq4}
\bar{p}^{\mathrm{th}}_{g,\omega,r} \!\coloneqq\! \sum_{k \in \boldsymbol{K}_r} \!\! \frac{p^{\mathrm{th}}_{g,\omega,k}}{K_r},
\end{equation}
\end{minipage}

\vspace{-0.15cm}
\begin{minipage}{0.45\columnwidth}
\begin{equation}
\label{prop:main_result_eq5}
\bar{p}^{\mathrm{c}}_{n,\omega,r} \!\coloneqq\!\! \sum_{k \in \boldsymbol{K}_r} \!\! \frac{p^{\mathrm{c}}_{n,\omega,k}}{K_r},
\end{equation}
\end{minipage}%
\hfill
\begin{minipage}{0.45\columnwidth}
\begin{equation}
\label{prop:main_result_eq6}
\bar{p}^{\mathrm{d}}_{n,\omega,r} \!\coloneqq\!\! \sum_{k \in \boldsymbol{K}_r} \!\! \frac{p^{\mathrm{d}}_{n,\omega,k}}{K_r},
\end{equation}
\end{minipage}
\vspace{0.1cm}

$\forall n, \forall g, \forall \omega, \forall r$. Then, $\boldsymbol{\bar{z}}$ is a feasible solution to the temporally aggregated model \eqref{AGG_SMPC} and it holds that $\bar{F}\left(\boldsymbol{\bar{z}}\right) \leq F\left(\boldsymbol{z}\right)$.
\end{proposition}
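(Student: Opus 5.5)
The plan is to verify directly that the image $\boldsymbol{\bar{z}}$ satisfies every constraint of \eqref{AGG_SMPC}, and then to compare the two objectives term by term. Throughout I use two structural facts about the clustering. First, the sliding window technique of Subsection~\ref{subsec:APosterioriTSA} produces chronologically contiguous clusters, so that $\min(\boldsymbol{K}_{r+1}) = \max(\boldsymbol{K}_r)+1$ and $\min(\boldsymbol{K}_0)=0$. Second, the first cluster is the singleton $\{0\}$. I also assume the sign conventions implicit in the model: $\alpha_g \ge 0$, $C^{\mathrm{ref}}_n \ge 0$, and $\boldsymbol{z}\ge \boldsymbol{0}$ by \eqref{FS_SMPC:pos_vars}. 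I set $\boldsymbol{\bar{u}}_0 \coloneqq \boldsymbol{u}_0$.

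\textbf{Step 1: the easy constraints.}
\begin{itemize}
\item The balance \eqref{AGG_SMPC:bal} follows by averaging \eqref{FS_SMPC:bal} over $k\in\boldsymbol{K}_r$, since the constraint is linear and the averaging map \eqref{prop:main_result_eq1}--\eqref{prop:main_result_eq6} is linear.
\item The box constraints \eqref{AGG_SMPC:storage_char_bounds}--\eqref{AGG_SMPC:thr_bounds} and the nonnegativity \eqref{AGG_SMPC:pos_vars} hold because averages of quantities in an interval stay in that interval.
\item The state-of-charge bounds \eqref{AGG_SMPC:storage_soc_bounds} and the initial condition \eqref{AGG_SMPC:storage_soc_init} are inherited pointwise from \eqref{prop:main_result_eq3}, using $\min(\boldsymbol{K}_0)=0$.
\item For the storage dynamics \eqref{AGG_SMPC:storage_soc}, I telescope \eqref{model:storage_soc} from $\min(\boldsymbol{K}_r)$ to $\min(\boldsymbol{K}_{r+1})$. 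This gives $e^{\mathrm{s}}_{\min(\boldsymbol{K}_{r+1})} = e^{\mathrm{s}}_{\min(\boldsymbol{K}_r)} + \sum_{k\in\boldsymbol{K}_r}(\eta^{\mathrm{c}}_n p^{\mathrm{c}}_k - \eta^{\mathrm{d}}_n p^{\mathrm{d}}_k)\Delta$, and the sum equals $K_r(\eta^{\mathrm{c}}_n\bar{p}^{\mathrm{c}}_r - \eta^{\mathrm{d}}_n\bar{p}^{\mathrm{d}}_r)\Delta$.
\item Non-anticipativity \eqref{AGG_SMPC:non_ant} holds because cluster $0$ is the singleton $\{0\}$, so the $r=0$ aggregated variables coincide with the $k=0$ full-scale ones, which equal $\boldsymbol{u}_0$ by \eqref{FS_SMPC:non_ant}.
\end{itemize}

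\textbf{Step 2: the emission constraint.} I average \eqref{model:thr_emission} over $\boldsymbol{K}_r$ to get $\alpha_g\frac{1}{K_r}\sum_{k}(p^{\mathrm{th}}_k)^2 + \beta_g\bar{p}^{\mathrm{th}}_r \le L^{\mathrm{CO}_2}_g$. By Jensen, $(\bar{p}^{\mathrm{th}}_r)^2 \le \frac{1}{K_r}\sum_k (p^{\mathrm{th}}_k)^2$. It then suffices to show
\[
K_r(\bar{p}^{\mathrm{th}}_r)^2-(K_r-1)(\overline{P}^{\mathrm{th}}_g)^2 \le (\bar{p}^{\mathrm{th}}_r)^2 .
\]
This is equivalent to $(K_r-1)\big((\bar{p}^{\mathrm{th}}_r)^2-(\overline{P}^{\mathrm{th}}_g)^2\big)\le 0$, which holds because $0\le \bar{p}^{\mathrm{th}}_r \le \overline{P}^{\mathrm{th}}_g$. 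Multiplying by $\alpha_g\ge0$ gives \eqref{AGG_SMPC:thr_emission}.

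\textbf{Step 3: the ramp constraints (main obstacle).} The unusual factor $K_r$ and the slack $R^{\mathrm{th}}_g(K_{r+1}-1)/2$ must be matched carefully. Write $\ell\coloneqq\max(\boldsymbol{K}_r)$, so that $\ell+1=\min(\boldsymbol{K}_{r+1})$.
\begin{itemize}
\item For \eqref{AGG_SMPC:thr_ramp1}: by nonnegativity, $K_r\bar{p}^{\mathrm{th}}_r=\sum_{k\in\boldsymbol{K}_r}p^{\mathrm{th}}_k \ge p^{\mathrm{th}}_\ell$. Iterating \eqref{model:thr_ramp1} gives $p^{\mathrm{th}}_{\ell+j}\le p^{\mathrm{th}}_\ell + jR^{\mathrm{th}}_g$ for $j=1,\dots,K_{r+1}$. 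Averaging over $j$ yields $\bar{p}^{\mathrm{th}}_{r+1}\le p^{\mathrm{th}}_\ell + R^{\mathrm{th}}_g\frac{K_{r+1}+1}{2} = p^{\mathrm{th}}_\ell + R^{\mathrm{th}}_g + R^{\mathrm{th}}_g\frac{K_{r+1}-1}{2}$. Subtracting the two bounds gives the constraint.
\item For \eqref{AGG_SMPC:thr_ramp2}, the argument is symmetric. Use $K_{r+1}\bar{p}^{\mathrm{th}}_{r+1}\ge p^{\mathrm{th}}_{\ell+1}$. Then iterate \eqref{model:thr_ramp2} backward from $\ell+1$, which gives $p^{\mathrm{th}}_{\ell+1-j}\le p^{\mathrm{th}}_{\ell+1}+jR^{\mathrm{th}}_g$ for $j=1,\dots,K_r$. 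Averaging over $j$ gives $\bar{p}^{\mathrm{th}}_r\le p^{\mathrm{th}}_{\ell+1}+R^{\mathrm{th}}_g\frac{K_r+1}{2}$.
\end{itemize}

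\textbf{Step 4: objective comparison.} I compare \eqref{AGG_SMPC:obj} with \eqref{FS_SMPC:obj} term by term, for each $\omega$ and $r$.
\begin{itemize}
\item Each linear cost term equals its full-scale counterpart, since $K_r$ times an average is the sum over the cluster.
\item For the revenue term, $e_{\omega,k}\ge0$ gives $-\max_{k\in\boldsymbol{K}_r}\{\pi_{\omega,k}\}\,K_r\bar{e}_{\omega,r} \le -\sum_{k\in\boldsymbol{K}_r}\pi_{\omega,k}e_{\omega,k}$.
\item For the quadratic term, the aggregated cost for cluster $r$ is exactly the full-scale term at $k=\min(\boldsymbol{K}_r)$. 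The remaining full-scale terms in the cluster are nonnegative because $C^{\mathrm{ref}}_n\ge0$.
\end{itemize}
Summing over $\omega$ and $r$ with weights $\phi_\omega\ge0$ gives $\bar F(\boldsymbol{\bar{z}})\le F(\boldsymbol{z})$.
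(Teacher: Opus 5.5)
Your proof is correct and follows essentially the same route as the paper's. Like the paper, you check each constraint for the averaged image, bound the ramp constraints using nonnegativity of the cluster sum together with iterated single-step ramp limits (merging the paper's two-step chain through $\min(\boldsymbol{K}_{r+1})$ and $\max(\boldsymbol{K}_r)$ into one), and compare objectives term by term via the cluster-wise maximum price and by dropping the nonnegative intra-cluster quadratic terms. The differences are minor: you verify the balance, storage, bound, and non-anticipativity constraints directly where the paper defers to prior work, correctly noting that contiguity and the singleton cluster $\{0\}$ are needed. For the emission constraint you use Jensen together with $\bar{p}^{\mathrm{th}}_{g,\omega,r}\le\overline{P}^{\mathrm{th}}_g$, whereas the paper implicitly bounds the cross terms of $(\sum_k p^{\mathrm{th}}_{g,\omega,k})^2$ by $(\overline{P}^{\mathrm{th}}_g)^2$.
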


The proof of Proposition~\ref{prop:main_result} is provided in the Appendix.

In words, Proposition~\ref{prop:main_result} asserts that every feasible solution of the full-scale model \eqref{FS_SMPC} corresponds to a feasible solution of the temporally aggregated model \eqref{AGG_SMPC} with an equal or lower objective function value.
Consequently, the controller proposed in Subsection~\ref{subsec:AggregatedDistributed} provides a guaranteed lower bound on the optimal objective function value of the original full-scale controller in Subsection~\ref{subsec:Full-Scale} at each MPC iteration.

Once a lower bound $F^{\mathrm{LB}}$ on the full-scale optimal objective function value $F^\star$ is obtained, as established in Proposition~\ref{prop:main_result},
an upper bound $F^{\mathrm{UB}}$ is computed by solving \eqref{FS_SMPC} with the first-stage decisions $\boldsymbol{u}_{0}$ fixed to those obtained from solving \eqref{AGG_SMPC}.
Under this restriction, \eqref{FS_SMPC} can be solved in parallel across scenarios.
We remark that the computation of the upper bound is supported by two key properties. First, as described in Subsection~\ref{subsec:APosterioriTSA}, the temporally aggregated model always preserves $k=0$ as a singleton cluster.
Second, the full-scale model satisfies the relatively complete recourse property.
Together, these properties guarantee that fixing the first-stage decisions in \eqref{FS_SMPC} to the solution of \eqref{AGG_SMPC} preserves feasibility.

The resulting \textbf{performance-guaranteed temporally aggregated and distributed stochastic MPC scheme} executes Algorithm~\ref{alg:dis_sto_MPC} at each time step $t \in \boldsymbol{T}$.

\begin{algorithm}[t]
\caption{Performance-Guaranteed Distributed Stochastic Predictive Control with A Posteriori Time Series Aggregation}\label{alg:dis_sto_MPC}
\begin{algorithmic}[1]
\Require $\boldsymbol{\beta}$, $\left\{\!\boldsymbol{\theta}^{\boldsymbol{\mathrm{s}}}_{n,\omega}, \!\boldsymbol{\theta}^{\boldsymbol{\mathrm{th}}}_{g,\omega}\!\right\}_{n \in \boldsymbol{N}, g \in \boldsymbol{G}, \omega \in \boldsymbol{\Omega}}$, $\gamma$, $\rho^0$, $\epsilon^{\mathrm{thr}}$, $\zeta^0$, $J$, $I$.

\vspace{0.05cm}
\Ensure Bounds $F^{\mathrm{UB}^\star}$ and $F^{\mathrm{LB}^\star}$, and control actions $\boldsymbol{\bar{u}}_0^\star$.

\State $j \gets 0$, $\zeta^j \gets \zeta^0$, $F^{\mathrm{UB}^{j}} \gets + \infty$, $F^{\mathrm{LB}^{j}} \gets - \infty$;

\State \textit{Higher layer - Temporal aggregation}:

\vspace{0.05cm}
\While{$100 \left|\frac{F^{\mathrm{UB}^{j}} - F^{{\mathrm{LB}}^{j}}}{F^{{\mathrm{UB}}^{j}}}\right| > \epsilon^{\mathrm{thr}}$ and $j < J$}

\vspace{0.05cm}
\State{
\parbox[t]{\dimexpr\linewidth-\algorithmicindent}{Perform TSA as detailed in Subsection~\ref{subsec:APosterioriTSA}, using \\ $\zeta^{j}$ and the average marginal costs across scenarios \\ from the previous MPC iteration as features;}
}

\vspace{0.05cm}
\State \parbox[t]{\dimexpr\linewidth-\algorithmicindent}{$i \!\gets\! 0$, $\rho \!\gets\! \rho^0$,
$\Big\{
\boldsymbol{\bar{p}}^{\boldsymbol{\mathrm{s}}^{0}}_{n,\omega} \!\gets\! \mathbf{0}^{2R},
\boldsymbol{\bar{p}}^{\boldsymbol{\mathrm{th}}^{0}}_{g,\omega} \!\gets\! \mathbf{0}^{R},
\boldsymbol{\hat{\lambda}}^{\boldsymbol{\mathrm{s}}^{0}}_{n,\omega} \!\gets\! \mathbf{0}^{2R},$
$\boldsymbol{\tilde{\lambda}}^{\boldsymbol{\mathrm{s}}^{0}}_{n,\omega} \gets \mathbf{0}^{2R},
\boldsymbol{\hat{\lambda}}^{\boldsymbol{\mathrm{th}}^{0}}_{g,\omega} \gets \mathbf{0}^{R}, 
\boldsymbol{\tilde{\lambda}}^{\boldsymbol{\mathrm{th}}^{0}}_{g,\omega} \gets \mathbf{0}^{R},
\; \forall n, \forall g, \forall \omega \Big\}$;}

\vspace{0.05cm}
\State \textit{Lower layer - Consensus ADMM}:
\vspace{0.05cm}
\While{the residual-based terminal condition from \cite{Boyd} \newline\hspace*{1.25em} is not satisfied and $i < I$}

\vspace{0.05cm}
\State $\left\{ \boldsymbol{z}^{\boldsymbol{\mathrm{b}}^{i+1}}, \boldsymbol{z}^{\boldsymbol{\mathrm{s}}^{i+1}}_{n,\omega}, \boldsymbol{\tilde{p}}^{\boldsymbol{\mathrm{th}}^{i+1}}_{g,\omega} \right\} \gets$ Solve \eqref{STEPI_1}--\eqref{STEPI_3};

\vspace{0.05cm}
\State $\left\{ \boldsymbol{\bar{p}}^{\boldsymbol{\mathrm{s}}^{i+1}}_{n,\omega}, \boldsymbol{\bar{p}}^{\boldsymbol{\mathrm{th}}^{i+1}}_{g,\omega} \right\} \gets$ Solve \eqref{STEPII_1}--\eqref{STEPII_2};

\vspace{0.05cm}
\State $\left\{ \!
\boldsymbol{\hat{\lambda}}^{\boldsymbol{\mathrm{s}}^{i+1}}_{n,\omega}, 
\boldsymbol{\tilde{\lambda}}^{\boldsymbol{\mathrm{s}}^{i+1}}_{n,\omega}, 
\boldsymbol{\hat{\lambda}}^{\boldsymbol{\mathrm{th}}^{i+1}}_{g,\omega}, 
\boldsymbol{\tilde{\lambda}}^{\boldsymbol{\mathrm{th}}^{i+1}}_{g,\omega}
\!\right\} \!\gets $ Solve \eqref{STEPIII_1}--\eqref{STEPIII_4};

\vspace{0.05cm}
\State Residual-based adaptive update of $\rho$ from \cite{Boyd};

\State $i \gets i+1$;

\EndWhile

\State $\boldsymbol{\bar{u}}_0^\star \gets$ Solve \eqref{AGG_SMPC:non_ant}, and $\bar{F}^\star \gets$ Solve \eqref{AGG_SMPC:obj};

\State \parbox[t]{\dimexpr\linewidth-\algorithmicindent}{${F^\mathrm{PRJ}} \gets$ Solve the full-scale stochastic model \eqref{FS_SMPC} with $\boldsymbol{u}_0 = \boldsymbol{\bar{u}}_0^\star$;
\Comment{In parallel $\forall \omega$}}

\State ${F^\mathrm{LB}}^{j+1} \gets \bar{F}^\star$, and ${F^\mathrm{UB}}^{j+1} \gets \min\big({F^\mathrm{UB}}^{j}, F^\mathrm{PRJ}\big)$;

\State $\zeta^{j+1} \gets \gamma \, \zeta^j$;
\State $j \gets j+1$;

\EndWhile

\State\textbf{return} ${{F^\mathrm{UB}}}^\star \gets {F^\mathrm{UB}}^j$, ${{F^\mathrm{LB}}}^\star \gets {F^\mathrm{LB}}^j$ and $\boldsymbol{\bar{u}}_0^\star$;

\end{algorithmic}
\end{algorithm}

Algorithm~\ref{alg:dis_sto_MPC} is structured into a higher and a lower layer, executing $j \in \boldsymbol{J}$ and $i \in \boldsymbol{I}$ iterations, respectively.
At time $t$, the \textbf{higher layer} receives the dual variable values computed at time $t-1$,
specifically the marginal cost estimates,
which serve as features for the sliding window clustering technique detailed in Subsection~\ref{subsec:APosterioriTSA}.
The identified clusters are then used to construct the temporally aggregated model \eqref{AGG_SMPC},
which is passed to the \textbf{lower layer}.
Here, the problem is decomposed into
$(|\boldsymbol{N}| + |\boldsymbol{G}|)\times|\boldsymbol{\Omega}| + 1$ 
temporally aggregated subproblems, which are solved in parallel across assets and scenarios via consensus ADMM.
Upon convergence of ADMM, determined according to the residual-based terminal condition of \cite{Boyd},
the solution of the aggregated model is employed to compute $F^{\mathrm{UB}}$ and $F^{\mathrm{LB}}$.
If the relative difference between these bounds falls below the threshold $\epsilon^{\mathrm{thr}}$, the algorithm terminates; otherwise $\zeta$, initialized to $\zeta^0$, is reduced by a positive factor $\gamma < 1$, and the algorithm iterates again.
Notably, the control actions computed by Algorithm~\ref{alg:dis_sto_MPC} at time $t$, denoted by $\boldsymbol{\bar{u}}^\star_0$, are \textbf{feasible} for the full-scale model \eqref{FS_SMPC}.

As illustrated in Fig.~\ref{fig:distributed_MPC_TSA}, Algorithm~\ref{alg:dis_sto_MPC} provides a formal performance guarantee by allowing the decision-maker to evaluate the achieved optimality gap (i.e., the relative difference between upper and lower bounds) at each iteration.

\section{Simulation Results and Discussion}
\label{sec:Results}

This section presents the case study (Subsection~\ref{sub:Results_CaseStudy}) and discusses the numerical results (Subsections~\ref{sub:Results_MarginalCost} and~\ref{sub:Results_ControllerPerformance}).

\subsection{Case Study Description}
\label{sub:Results_CaseStudy}

We simulate the VPP energy dispatch over one year
using a prediction horizon of $24$~h and a sampling time of $\Delta = 5$~min, i.e., $|\boldsymbol{K}| = 288$.
To ensure transparency and reproducibility, the VPP parameters are sampled from prescribed distributions.

For each storage unit, the minimum state of charge is set to $0$~MWh,
while the energy capacity is sampled from a uniform distribution over $[1,5]$~MWh.
The maximum charging/discharging power limits are drawn uniformly from $[0.1~\mathrm{MW}, \overline{E}^{\mathrm{s}}_n/ (1~\mathrm{h})], \forall n$.
The initial state is set to $0$~MWh,
and charging and discharging efficiencies are sampled uniformly from $[0.7,0.9]$ and $[1/0.9,1/0.7]$, respectively.
The reference states are $\overline{E}^{\mathrm{s}}_n/2, \forall n$.

For each thermal unit, $\underline{P}^\mathrm{th}_g = 0$~MW,
the capacity is drawn uniformly from $[0,2]$~MW,
the ramp limit is $\overline{P}^{\mathrm{th}}_g/6$,
and the parameters in \eqref{model:thr_emission} are
{\small $\alpha_g \!=\! \frac{1}{2} \left(\overline{P}^{\mathrm{th}}_g\right)^{-2}$},
{\small $\beta_g \!=\! \left(2 \, \overline{P}^{\mathrm{th}}_g\right)^{-1}$}, and
{\small $L^{\mathrm{CO}_2}_g \!=\! 0.9$}.
These parameter choices ensure that the coefficients defining the emission constraints are explicitly determined as a function of the nominal capacity of each thermal generator.
Thermal and storage operation costs are drawn uniformly from $[30,60]$ and $[5,15]$~\EUR/MWh, respectively.
The cost of non-supplied energy is $5000$~\EUR/MWh,
and deviations from storage reference states incur a cost of $1$~\EUR/(MWh)$^2$.

\begin{figure}[t]
\centering
\includegraphics[scale=.6]{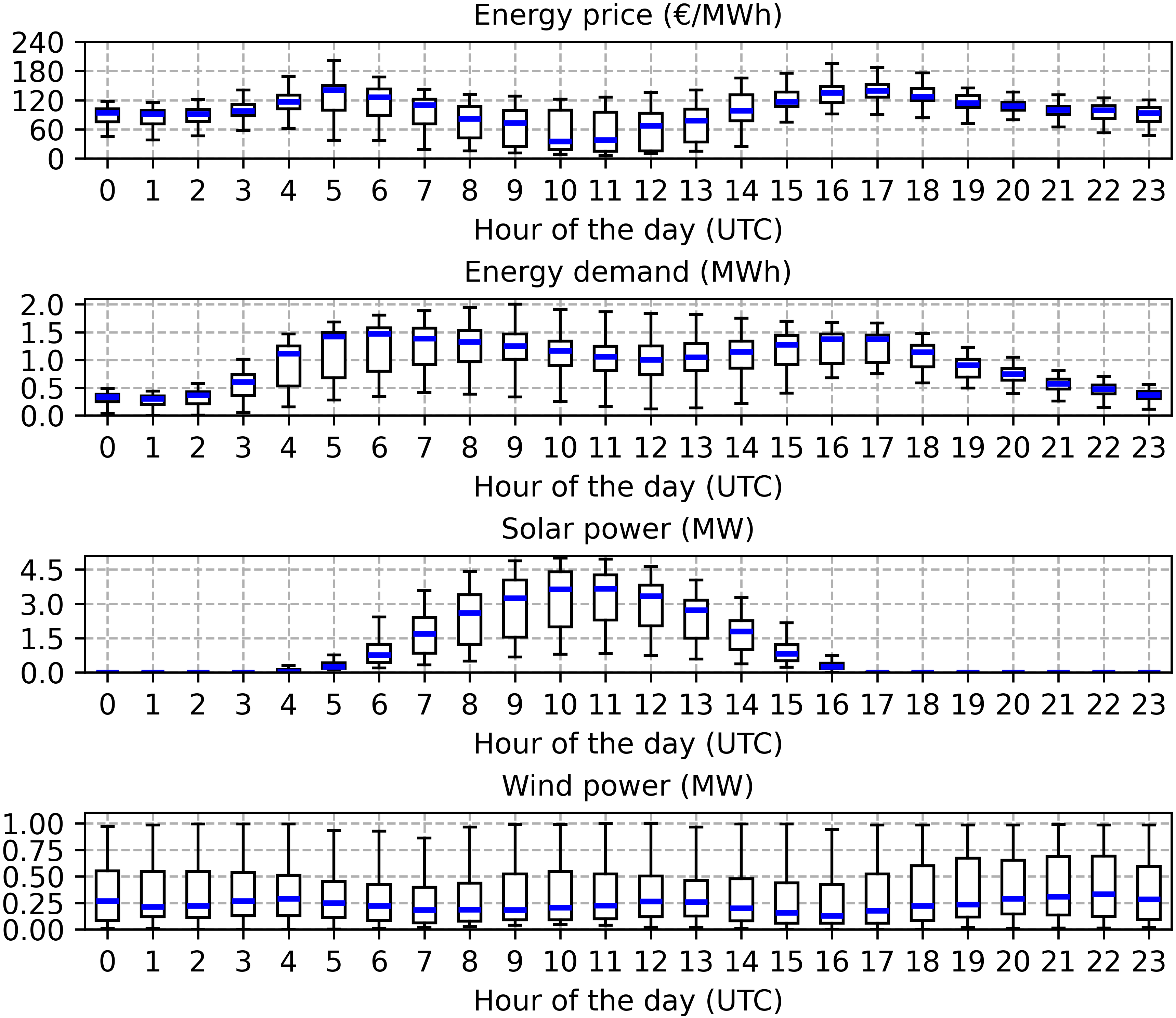}
\caption{Boxplots of the hourly uncertainty realization values. The box represents the interquartile range, the blue line indicates the median, and the whiskers extend to the minimum and maximum observations.}
\label{fig:input_data_boxplot}
\end{figure}

The uncertainty realizations are represented by scaled day-ahead energy prices, energy demand, and wind and solar power generation in Austria for 2024,
obtained from the ENTSO-E Transparency Platform~\cite{hirth2018entso} and illustrated in Fig.~\ref{fig:input_data_boxplot}.
For each uncertainty source, equiprobable scenarios are generated by perturbing the baseline observations $y_k$
with multiplicative Laplace-distributed noise according to
\begin{equation}
    \hat{y}_{\omega,k} = \max\left(0,\, y_k (1 + \vartheta_\omega \, \frac{k}{K - 1} + \sigma_{\omega,k})\right), \; \forall \omega, \forall k.
\end{equation}
Here, $\hat{y}_{\omega,k}$ represents the forecast value in scenario $\omega$ at the MPC prediction step $k$, and $\sigma_{\omega,k} \sim \mathcal{L}(0,b_k)$ denotes Laplace-distributed noise with a scale parameter that increases linearly according to
$b_k = 0.25\,k/(K-1)$,
thereby reflecting a typical progressive increase in forecast uncertainty over the MPC prediction horizon.
For each scenario, $\vartheta_\omega$ is sampled uniformly from $[-0.05,\,0.05]$, introducing a scenario-specific bias.
An example of the resulting scenario distributions over the MPC prediction horizon is shown in Fig.~\ref{fig:scenarios_example}.

We emphasize that the adopted scenario generation approach is intended to ensure the transparency and reproducibility of the reported results while enabling the evaluation of the proposed methodology under realistic uncertainty conditions. The development of novel forecasting models for the considered uncertainty sources is beyond the scope of this work and is instead addressed in dedicated studies \cite{hong2020energy}.

If not otherwise specified, in Algorithm~\ref{alg:dis_sto_MPC} we set
$\gamma = 0.6$, $\rho^0 = 4$, $\epsilon^{\mathrm{thr}} = 1\%$, $\zeta^0 = 2$, $J = 10$, and $I = 500$.
Moreover, Algorithm~\ref{alg:dis_sto_MPC} is implemented using Python's multiprocessing module \cite{multiprocessing},
thereby enabling the fully parallel execution of both the distributed subproblems arising from the application of consensus ADMM to the temporally aggregated model \eqref{AGG_SMPC},
as detailed in Subsection \ref{subsec:AggregatedDistributed},
and the subproblems obtained by fixing the first-stage decisions in the full-scale model \eqref{FS_SMPC} to those computed from the temporally aggregated model \eqref{AGG_SMPC},
i.e., the algorithmic step yielding $F^{\mathrm{PRJ}}$ in Algorithm~\ref{alg:dis_sto_MPC}.
Each subproblem is assigned to an independent process with separate memory allocation.

The simulations are performed on an Intel i7 CPU with 32GB of RAM using Gurobi 12.0.1.

\begin{figure}[t]
\centering
\includegraphics[scale=.59  ]{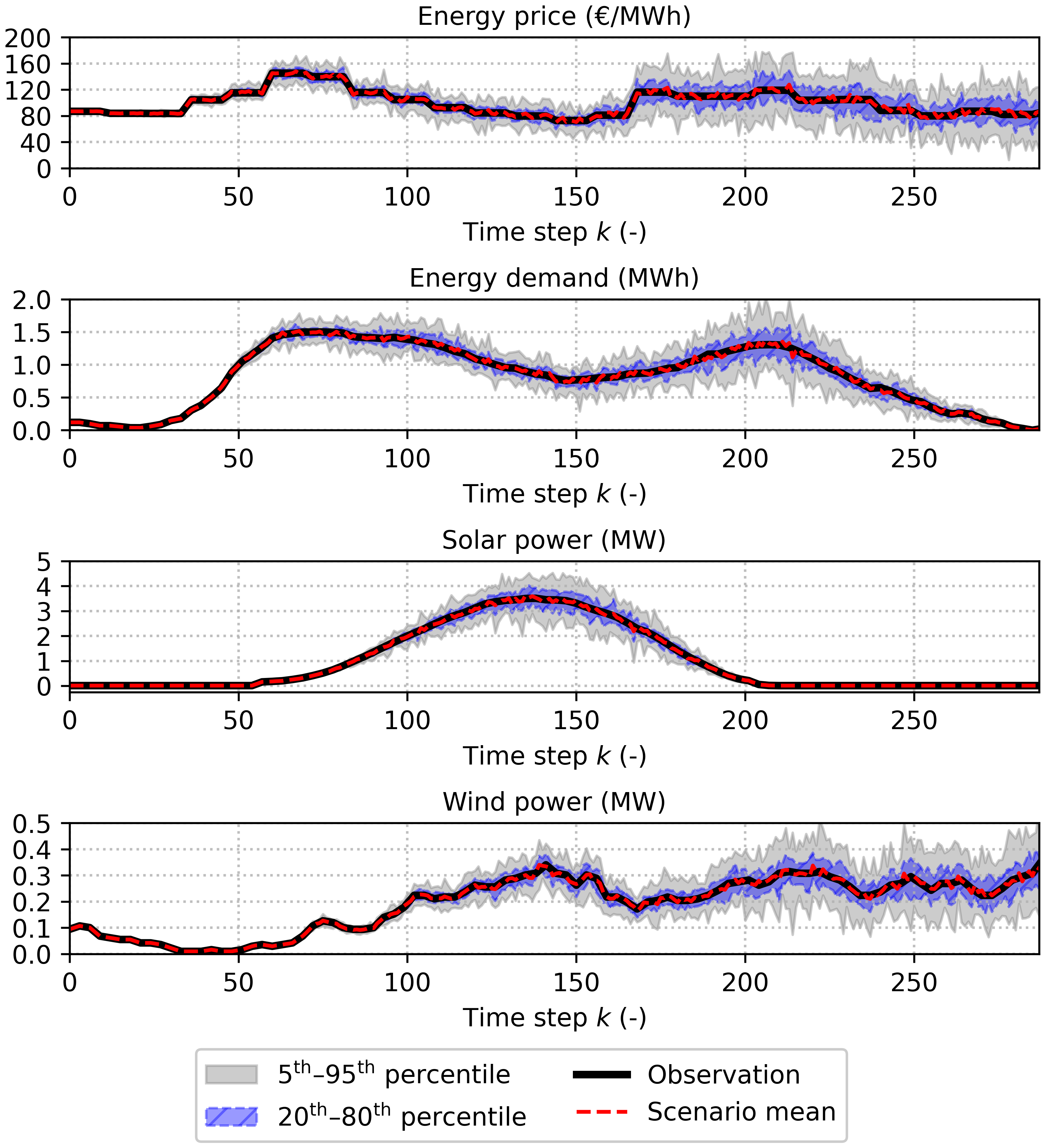}%
\caption{
Example of the generated scenario distributions over the MPC prediction horizon for $|\boldsymbol{\Omega}| = 50$.
The plots depict the baseline observation, the scenario mean, and the $5^{\mathrm{th}}$–$95^{\mathrm{th}}$ and $20^{\mathrm{th}}$–$80^{\mathrm{th}}$ percentile intervals characterizing the scenario distributions for each uncertainty source.
}
\label{fig:scenarios_example}
\end{figure}

\subsection{Clustering Based on the Marginal Cost}
\label{sub:Results_MarginalCost}

As demonstrated in \cite{wogrin2023time}, exact temporal aggregation, i.e., zero error between the aggregated and full-scale models, is achieved when TSA yields an aggregated model that preserves the active constraints of the full-scale model.
In \eqref{FS_SMPC}, the marginal costs serve as an effective indicator of constraint activation over time:
variations in these dual variables reflect changes in the marginal generating units and thus implicitly encode the activation of dispatch constraints.
To illustrate the potential benefits of using marginal costs as clustering features for TSA, we consider the following examples.

We first examine a deterministic dispatch model ($|\boldsymbol{\Omega}| = 1$),
comprising $10$ thermal units.
Market participation is neglected, and each thermal unit is assigned an operating cost of $50$~\euro/MWh.
As shown in Fig.~\ref{fig:thermal_only_QCP_features_comparison},
applying the sliding window clustering technique of Subsection~\ref{subsec:APosterioriTSA} with similarity threshold $\zeta = 0$ and marginal costs as features,
yields distinct sets of clusters depending on the emission limit parameters $L^{\mathrm{CO}_2}_g$.
Specifically, $5$ to $7$ clusters are identified (including the singleton clusters corresponding to the first and last hours in $\boldsymbol{K}$),
\textbf{enabling exact aggregation},
while reducing the temporal dimension by two orders of magnitude relative to the original $288$ time steps.
More importantly, Fig.~\ref{fig:thermal_only_QCP_features_comparison} shows that the clusters yielding exact TSA vary with the structural properties of the dispatch model (here, the emission limit parameters), even when the input time series (scenarios) are identical.
This is captured when performing TSA based on the marginal costs, whereas conventional a priori TSA methods, which rely solely on input time series analysis, generally fail.

\begin{figure}[t]
\centering
\includegraphics[scale=.59]{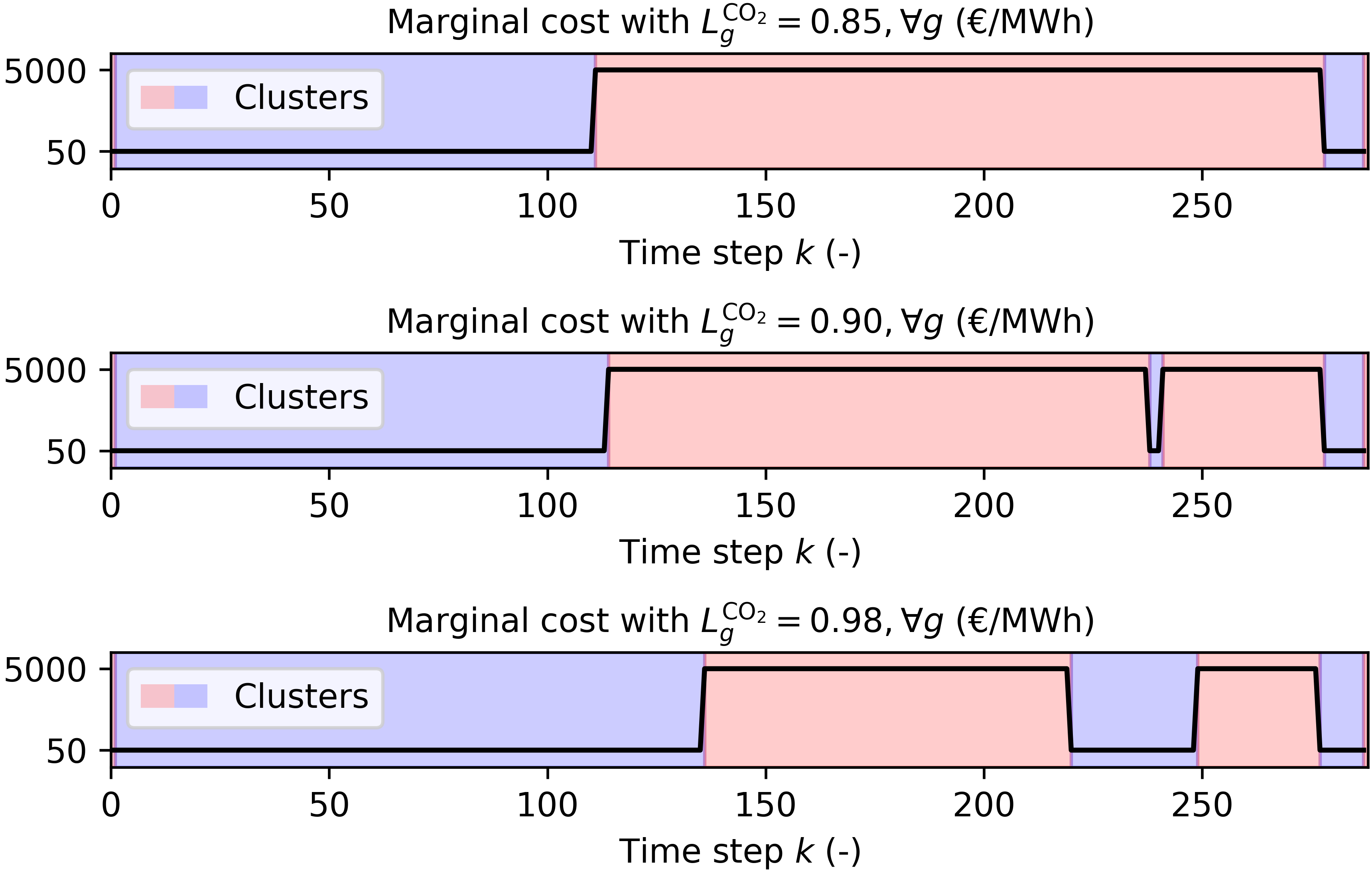}
\caption{Clusters obtained using marginal costs as features for TSA in the dispatch of thermal and solar power units under varying emission limits.}
\label{fig:thermal_only_QCP_features_comparison}
\end{figure}

For the same deterministic dispatch model comprising $10$ thermal units, Fig.~\ref{fig:constraints_act} further illustrates how marginal-cost-based clustering enables capturing the activation of the dispatch constraints.
The results correspond to the case $L^{\mathrm{CO}_2}_g = 0.98$ for all $g$,
i.e., the last subplot in Fig.~\ref{fig:thermal_only_QCP_features_comparison}.
The figure reports the normalized thermal emissions averaged across all generators, 
corresponding to the average value of the left-hand-side term in \eqref{model:thr_emission}, 
together with the occurrence of non-supplied energy and the associated marginal costs.
When the thermal units operate below their emission limits $L^{\mathrm{CO}_2}_g$, the emission constraints remain inactive and the marginal cost equals the thermal generation cost ($50$~\euro/MWh).
During periods of high demand, however, thermal output increases until the emission limits become binding.
Once these constraints are active, additional demand can only be satisfied through non-supplied energy, causing the marginal cost to increase from $50$~\euro/MWh to the non-supplied energy penalty of $5000$~\euro/MWh.
The marginal cost therefore provides a direct signature of changes in the active constraint set of the dispatch problem.

\begin{figure}[t]
\centering
\includegraphics[scale=.59]{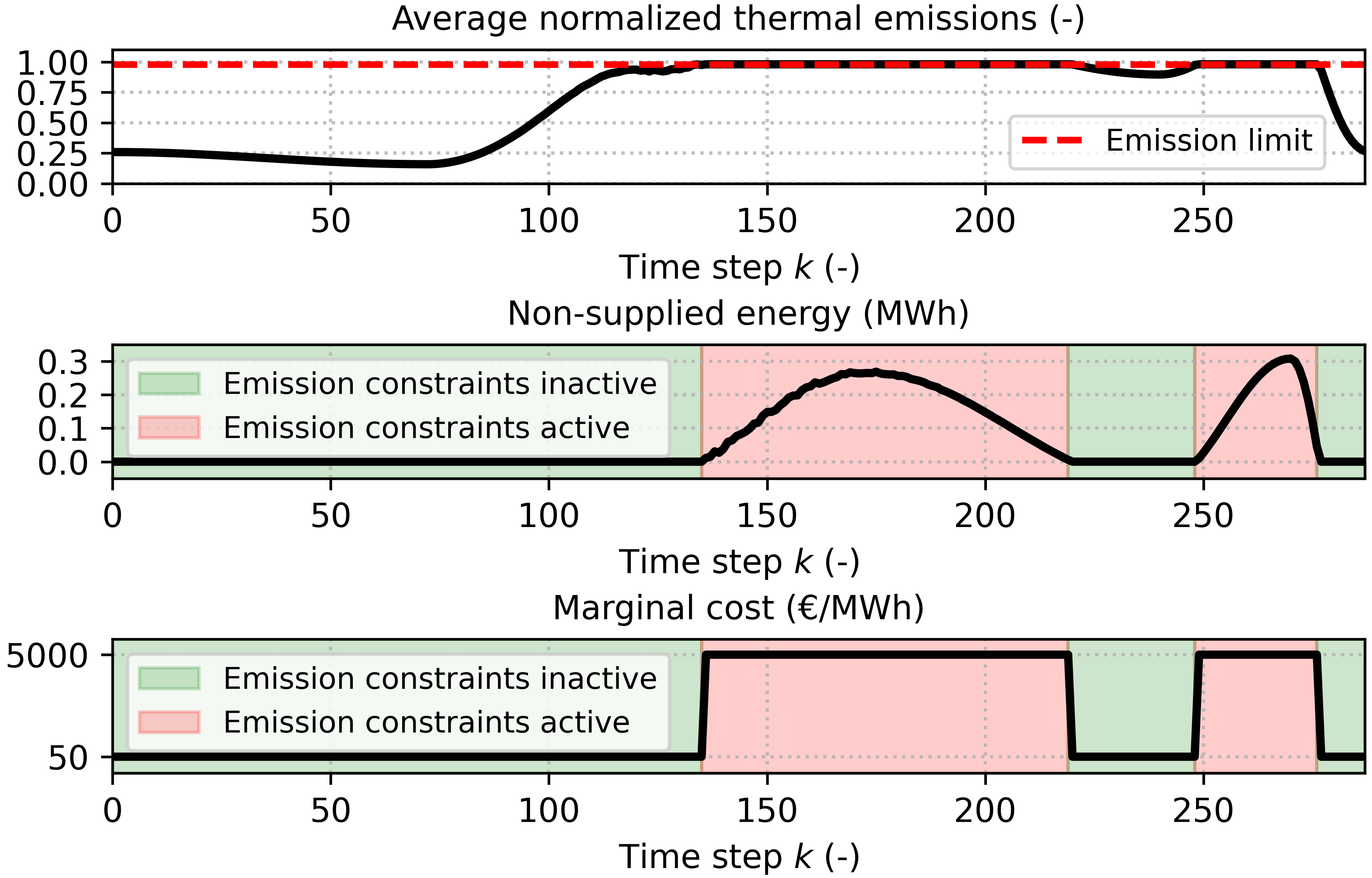}
\caption{Example of the relationship between marginal costs, the occurrence of non-supplied energy, and the activation of the emission constraints in the dispatch problem.
}
\label{fig:constraints_act}
\end{figure}

\begin{figure}[t]
\centering
\includegraphics[scale=.59]{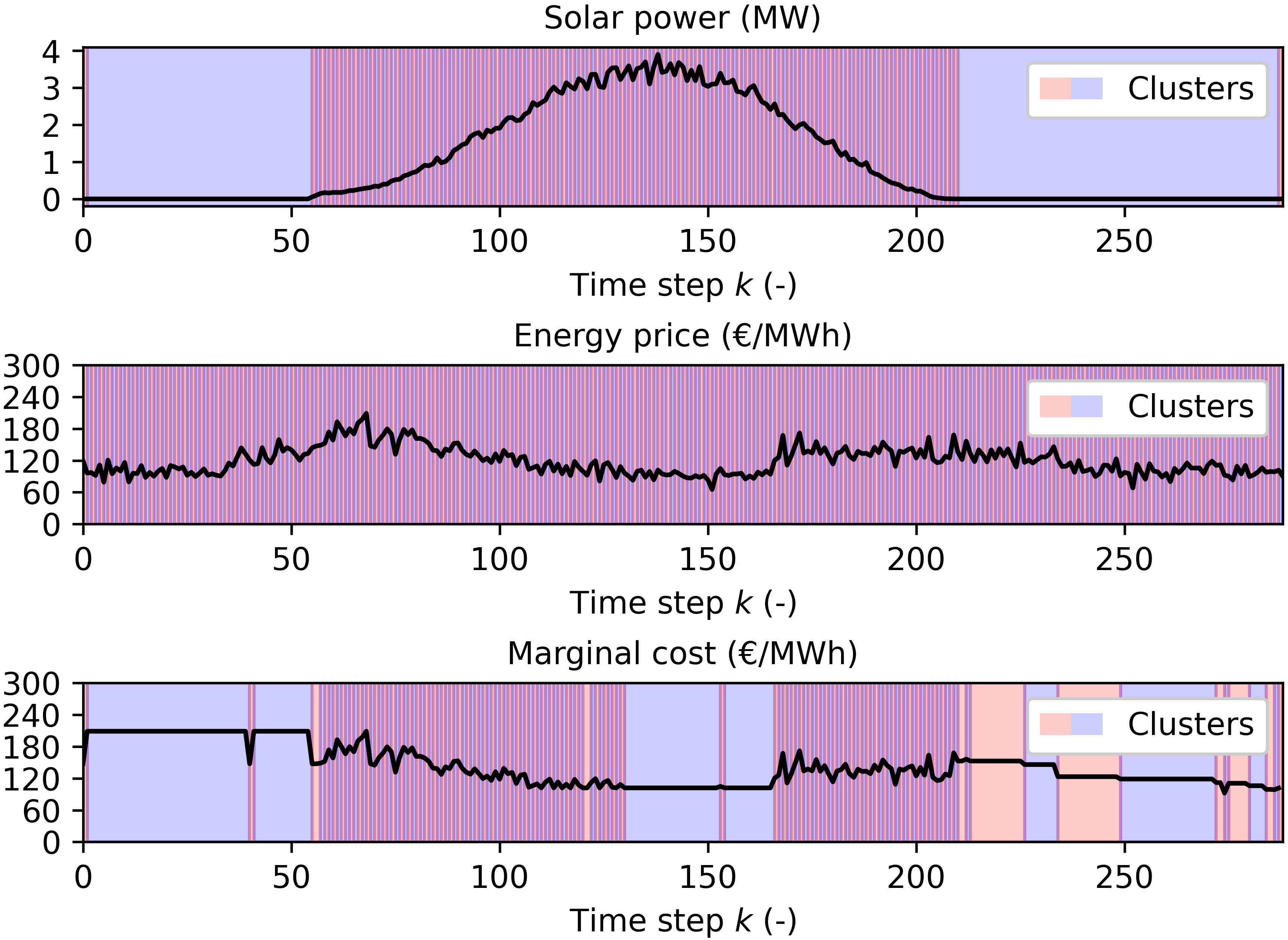}
\caption{Clusters obtained using different features (solar power, prices, or marginal costs) for TSA in the dispatch of storage and solar power units.}
\label{fig:bess_only_features_comparison}
\end{figure}

Fig.~\ref{fig:bess_only_features_comparison} reports the clusters obtained with our TSA method under different feature selections for a dispatch model with $10$ storage units.
The optimal objective function value of the full-scale dispatch model is $-3745.9$~\euro,
whereas the temporally aggregated models obtained using solar generation, energy prices, and marginal costs as clustering features yield optimal objective function values of $-4852.7$~\euro, $-3745.9$~\euro, and $-3745.9$~\euro, respectively.
Using solar generation as the clustering feature results in only $159$ clusters ($45\%$ reduction in the temporal dimension), but incurs a $30\%$ aggregation error.
Conversely, clustering based on the energy prices enables achieving the same optimal objective function value of the full-scale model, but provides no reduction in the temporal dimension.
Notably, using marginal costs as clustering features yields exact aggregation with only $137$ clusters, corresponding to a $52\%$ reduction in the temporal dimension,
thereby providing the most favorable trade-off between aggregation accuracy and temporal dimensionality reduction among the considered clustering features.
Compared to the thermal-only case in Fig.~\ref{fig:thermal_only_QCP_features_comparison},
a larger number of clusters is required,
reflecting the complexity induced by the intertemporal storage constraints.

\subsection{Performance Evaluation of the Proposed Controller}
\label{sub:Results_ControllerPerformance}

An example of the objective function bounds computed by Algorithm~\ref{alg:dis_sto_MPC} is shown in Fig.~\ref{fig:bounds_example},
for a dispatch problem with $|\boldsymbol{N}| =$ $|\boldsymbol{G}| =$ $|\boldsymbol{\Omega}| = 50$.
The similarity threshold $\zeta$ is initialized at $2$ and iteratively refined using $\gamma = 0.6$.
Therefore, over the $5$ iterations reported in the figure, the algorithm explores the following similarity threshold values: $\left\{2.0, 1.2, 0.7, 0.4, 0.3\right\}$.
The full-scale optimal objective function value is reported solely for reference, as it is not required by the algorithm.
The bounds are iteratively tightened, 
achieving an optimality gap of $\sim5\%$ with $138$ clusters,
and converging to a gap below $1\%$ (i.e., over $99\%$ accuracy relative to the full-scale controller) with $193$ clusters ($\sim 33\%$ reduction in the temporal dimension).

\begin{figure}[t]
    \centering
    \includegraphics[scale=.59]{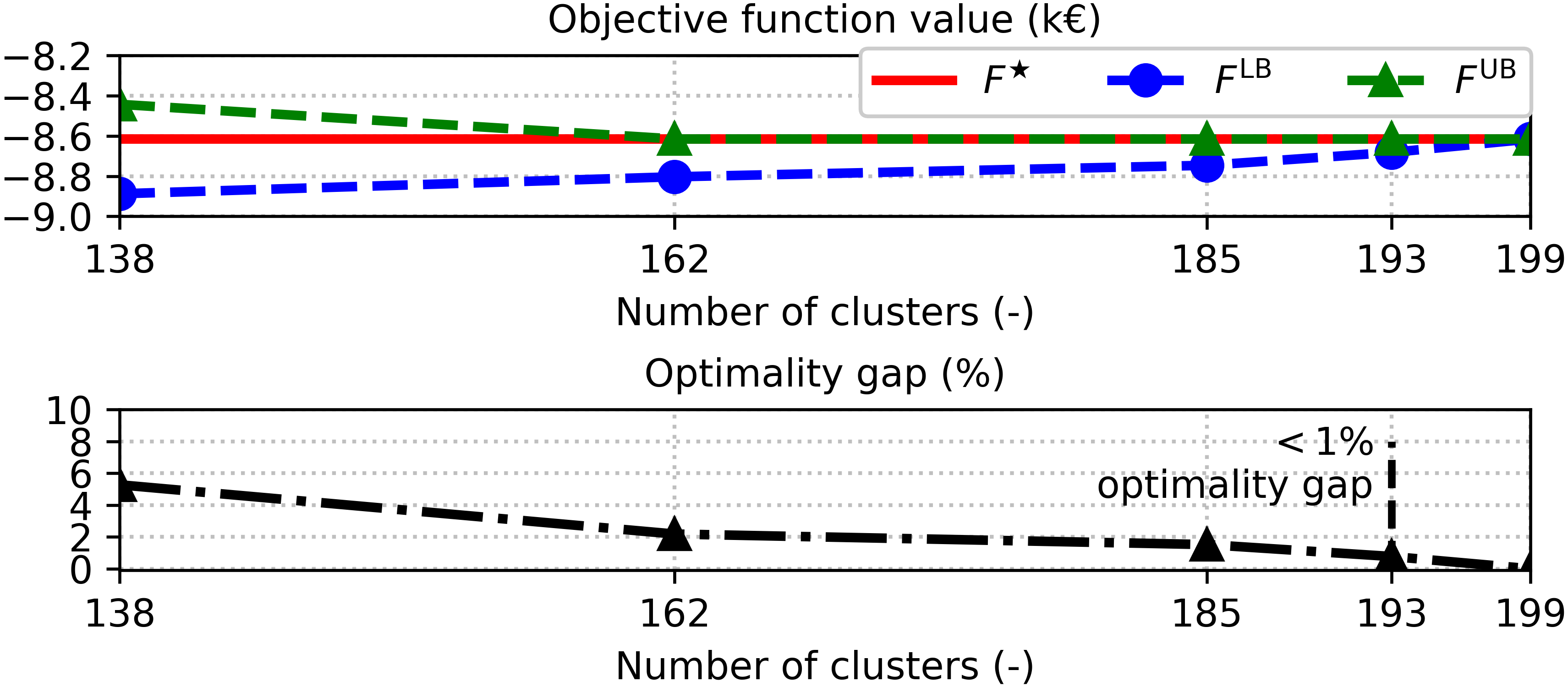}
    \caption{Example of objective function bounds computed using Algorithm~\ref{alg:dis_sto_MPC}.}
    \label{fig:bounds_example}
\end{figure}

\begin{figure}[t]
\centering
\includegraphics[scale=.59]{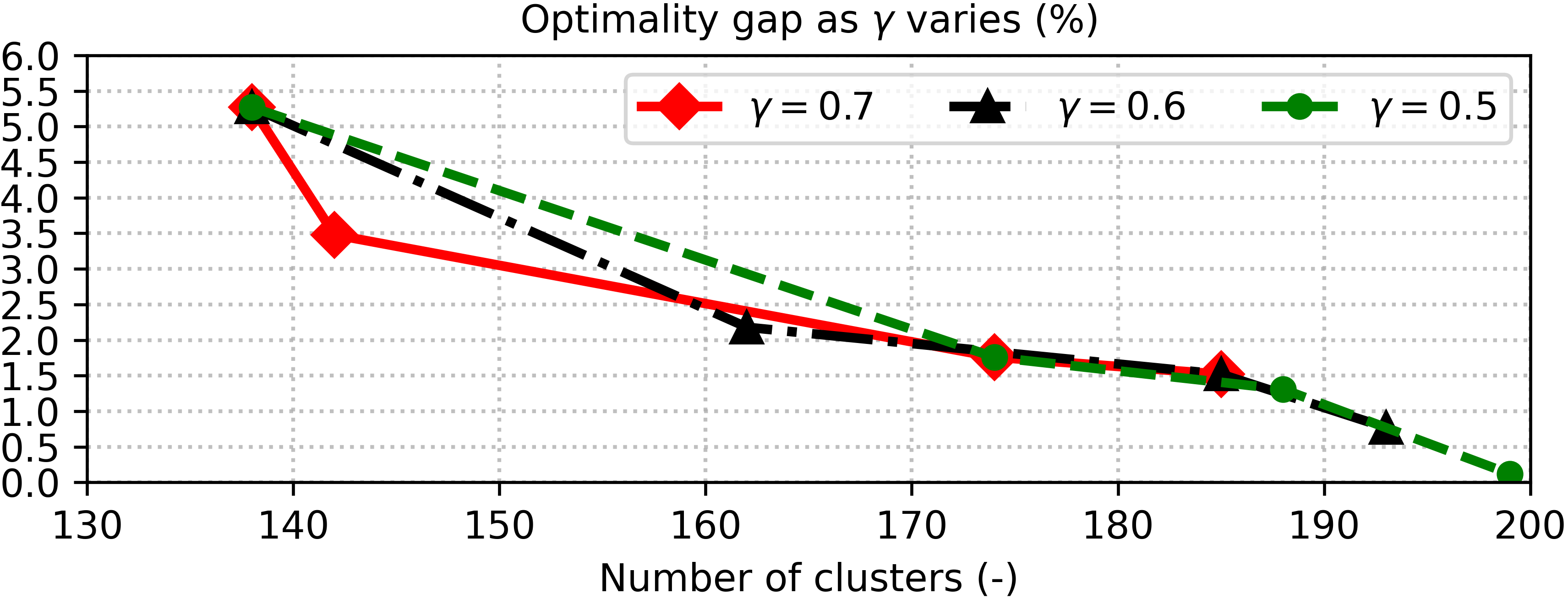}
\caption{Performance of Algorithm~\ref{alg:dis_sto_MPC} under varying values of $\gamma$.
}
\label{fig:gamma_sensitivity}
\end{figure}

The performance of Algorithm~\ref{alg:dis_sto_MPC} is directly influenced by the tuning of the parameter $\gamma$,
which determines the degree of temporal aggregation employed throughout the iterative solution process. Fig.~\ref{fig:gamma_sensitivity} presents examples of the convergence behavior over the first four iterations of Algorithm~\ref{alg:dis_sto_MPC} for different values of $\gamma$.
Large values ($\gamma=0.7$) produce highly aggregated models and therefore may require a large number of iterations to converge to the prescribed optimality gap ($\epsilon^{\mathrm{thr}}=1\%$).
Conversely, small values ($\gamma=0.5$) accelerate convergence by generating more accurate aggregated models,
albeit at the expense of increased computational effort per iteration due to their higher temporal resolution.
An intermediate value ($\gamma=0.6$) provides the best trade-off between the number of iterations required for convergence and the computational complexity of the resulting temporally aggregated models.

Within Algorithm~\ref{alg:dis_sto_MPC}, the aggregated model is solved using consensus ADMM. 
An example of the ADMM convergence behavior for different initial values of the step size parameter $\rho$ is shown in Fig.~\ref{fig:ADMM_convergence}. 
A small initial step size ($\rho^0=2$) places less emphasis on constraint enforcement, enabling aggressive primal updates that introduce transient overshoots. 
Conversely, a large initial step size ($\rho^0=10$) excessively prioritizes constraint satisfaction, resulting in conservative updates and slower overall convergence. 
An intermediate value ($\rho^0=4$) achieves a favorable balance between primal progress and constraint feasibility, yielding the fastest convergence. 
This behavior highlights the well-known sensitivity of the ADMM performance to parameter tuning \cite{Boyd}.

\begin{figure}[t]
\centering
\includegraphics[scale=.59]{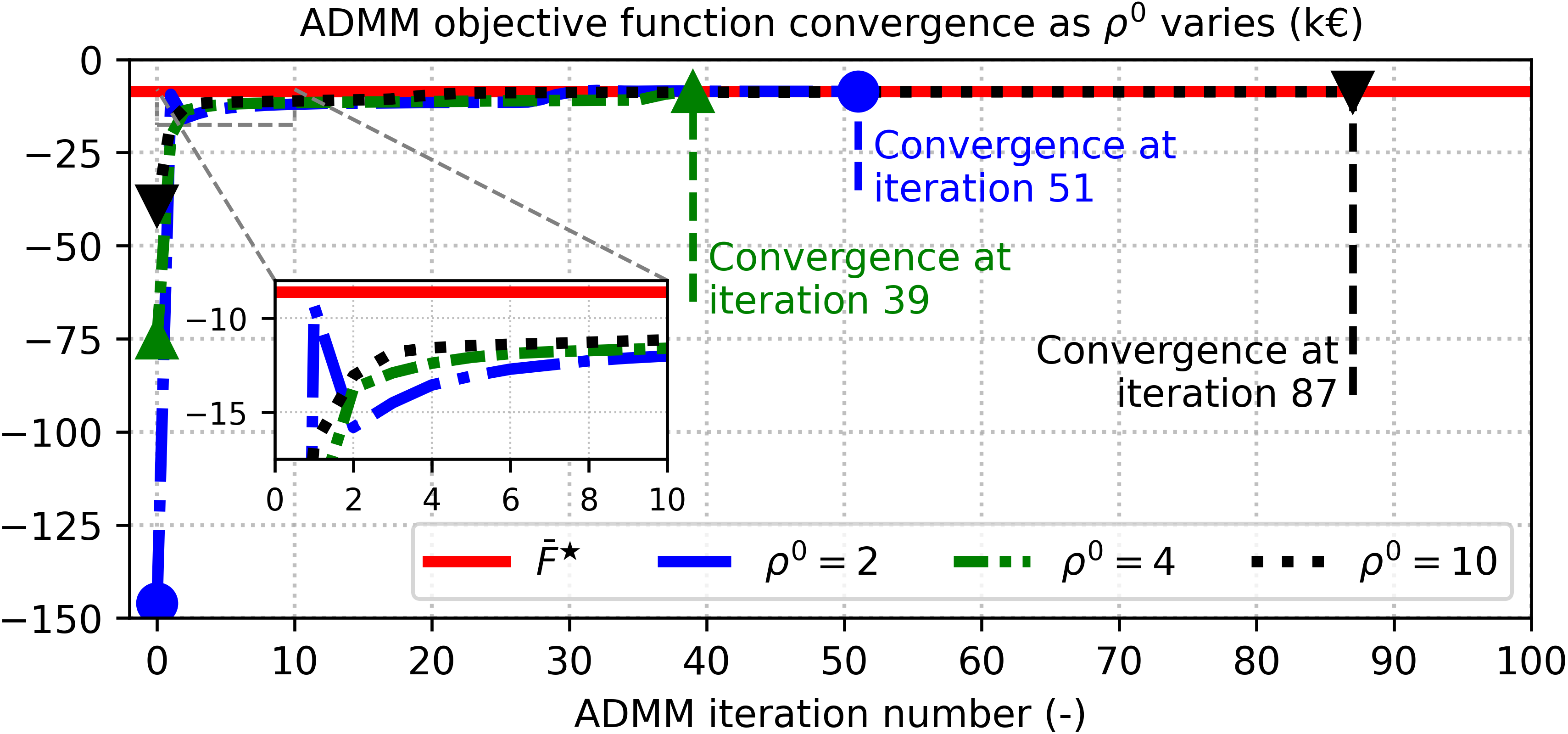}
\caption{Example of ADMM convergence within Algorithm~\ref{alg:dis_sto_MPC} as $\rho^0$ varies.}
\label{fig:ADMM_convergence}
\end{figure}

\begin{figure}[t]
\centering
\includegraphics[scale=.6]{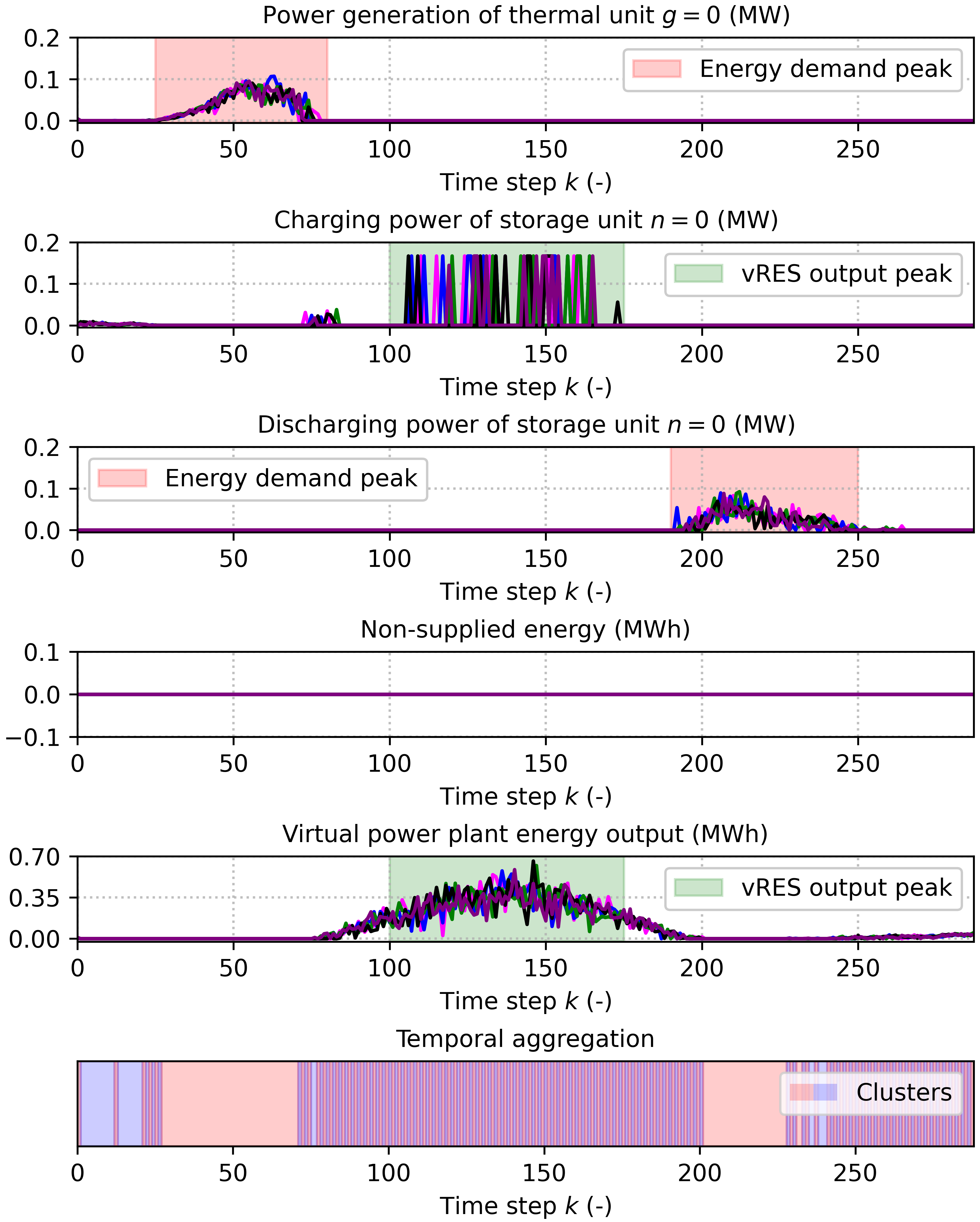}
\caption{Example of the identified clusters and the corresponding energy dispatch obtained using Algorithm~\ref{alg:dis_sto_MPC} for the first storage unit ($n=0$) and the first thermal unit ($g=0$) of the VPP, shown for $5$ randomly selected scenarios from a set with $50$ scenarios. The curves in the first five subplots represent the dispatch trajectories corresponding to the selected scenarios.}
\label{fig:dispatch_example}
\end{figure}

An example of VPP dispatch obtained using Algorithm~\ref{alg:dis_sto_MPC} is shown in Fig.~\ref{fig:dispatch_example} for an MPC iteration starting at midnight.
Despite the simultaneous temporal aggregation and asset-scenario decomposition,
our controller recovers the full-scale optimal control actions, dispatching the VPP flexibility units to meet energy demand with zero non-supplied energy.
Notably, the identified clusters ($197$ in total) effectively capture the periods of peak thermal generation and storage discharge while achieving a $32\%$ reduction in the temporal dimension of the dispatch problem, originally defined over $288$ time steps.

\begin{table}[t]
\caption{
Computational performance of Algorithm~\ref{alg:dis_sto_MPC} in comparison with full-scale MPC (FS-MPC) as $|\boldsymbol{\Omega}|$, $|\boldsymbol{N}|$ and $|\boldsymbol{G}|$ increase. Relative runtime differences are shown in \textbf{bold} in brackets, and intractable FS-MPC instances are marked with the symbol $\dagger$.}
\centering
\setlength{\tabcolsep}{1.7pt}
\begin{tabular}{|c|c|c|c|c|c|c|c|}
\hline
\multirow{2}{*}{$|\boldsymbol{\Omega}|$} & \multirow{2}{*}{$|\boldsymbol{N}|$} 
& \multirow{2}{*}{$|\boldsymbol{G}|$} 
& \textbf{FS-MPC} 
& \multicolumn{4}{c|}{\textbf{Algorithm~\ref{alg:dis_sto_MPC}}} \\
&&& \textbf{Avg. runtime} (s) 
& \multicolumn{1}{c}{\textbf{$\epsilon^{\mathrm{thr}}$}}
& \multicolumn{1}{c}{\textbf{Avg. $\epsilon^\star$}} 
& \multicolumn{1}{c}{\textbf{Avg.} $|\boldsymbol{R}|$} 
& \multicolumn{1}{c|}{\textbf{Avg. runtime} (s)} \\
\hline
\multirow{ 2}{*}{$50$} & \multirow{ 2}{*}{$50$} & \multirow{ 2}{*}{$50$} & \multirow{ 2}{*}{$46.9$} & $1\%$ & $0.7\%$ & $198.8$ & $37.1$ ($\boldsymbol{-21\%}$)\\
& & & & $5\%$ & $3.9\%$ & $141.1$ & $24.6$ ($\boldsymbol{-48\%}$)\\
\hline
\multirow{ 2}{*}{$100$} & \multirow{ 2}{*}{$50$} & \multirow{ 2}{*}{$50$} & \multirow{ 2}{*}{$122.6$} & $1\%$ & $0.8\%$ & $199.0$ & $92.1$ ($\boldsymbol{-25\%}$)\\
& & & & $5\%$ & $4.4\%$ & $138.6$ & $57.9$ ($\boldsymbol{-53\%}$)\\
\hline
\multirow{ 2}{*}{$50$} & \multirow{ 2}{*}{$100$} & \multirow{ 2}{*}{$100$} & \multirow{ 2}{*}{$148.2$} & $1\%$ & $0.3\%$ & $201.6$ & $157.3$ ($\boldsymbol{+6\%}$)\\
& & & & $5\%$ & $3.6\%$ & $148.1$ & $130.9$ ($\boldsymbol{-12\%}$)\\
\hline
\multirow{ 2}{*}{$50$} & \multirow{ 2}{*}{$200$} & \multirow{ 2}{*}{$100$} & \multirow{ 2}{*}{$356.5^{\dagger}$} & $1\%$ & $0.7\%$ & $212.3$ & $276.1$ ($\boldsymbol{-23\%}$)\\
& & & & $5\%$ & $4.3\%$ & $166.9$ & $189.7$ ($\boldsymbol{-47\%}$)\\
\hline
\multirow{ 2}{*}{$50$} & \multirow{ 2}{*}{$100$} & \multirow{ 2}{*}{$200$} & \multirow{ 2}{*}{$388.6^{\dagger}$} & $1\%$ & $0.8\%$ & $200.4$ & $274.2$ ($\boldsymbol{-29\%}$)\\
& & & & $5\%$ & $4.7\%$ & $148.7$ & $186.3$ ($\boldsymbol{-52\%}$)\\
\hline
\multirow{ 2}{*}{$200$} & \multirow{ 2}{*}{$50$} & \multirow{ 2}{*}{$50$} & \multirow{ 2}{*}{$433.8^{\dagger}$} & $1\%$ & $0.9\%$ & $202.8$ & $205.5$ ($\boldsymbol{-53\%}$)\\
& & & & $5\%$ & $4.7\%$ & $108.0$ & $109.1$ ($\boldsymbol{-75\%}$)\\
\hline
\end{tabular}
\label{tab:scenario_assets_scalability}
\end{table}

\begin{table}[t]
\caption{
Computational performance of Algorithm~\ref{alg:dis_sto_MPC}, a priori TSA (AP-TSA), marginal-cost-based TSA (MC-TSA), and consensus ADMM compared with full-scale MPC (FS-MPC) for increasing $|\boldsymbol{K}|$ and $|\boldsymbol{\Omega}|$.
Relative runtime differences for all benchmark methods are computed with respect to FS-MPC, 
with relative runtime reductions for Algorithm~\ref{alg:dis_sto_MPC} highlighted in \textbf{bold}. 
Intractable FS-MPC instances are marked with the symbol $\dagger$.
}
\centering
\setlength{\tabcolsep}{2.5pt}
\begin{tabular}{|c|c|c|c|c|c|c|}
\hline
\multirow{ 2}{*}{$|\boldsymbol{K}|$} &
\multirow{ 2}{*}{$|\boldsymbol{\Omega}|$} &
\textbf{FS-MPC} & 
\multicolumn{4}{c|}{\textbf{Relative difference in average runtime}}\\
 & &
\textbf{Avg. runtime} (s) &
\multicolumn{1}{c}{\textbf{AP-TSA}} &
\multicolumn{1}{c}{\textbf{MC-TSA}} &
\multicolumn{1}{c}{\textbf{ADMM}} &
\multicolumn{1}{c|}{\textbf{Algorithm~\ref{alg:dis_sto_MPC}}}\\
\hline
$288$ & $50$
& $46.9$
& $+31 \%$
& $-9 \%$
& $-2 \%$
& $\boldsymbol{-21\%}$ \\
\hline
$288$ & $100$
& $122.6$
& $+29 \%$
& $-11 \%$
& $-5 \%$
& $\boldsymbol{-25\%}$ \\
\hline
$576$ & $50$
& $114.4$
& $+26 \%$
& $-18 \%$
& $-7 \%$
& $\boldsymbol{-34\%}$ \\
\hline
$576$ & $100$
& $341.7^\dagger$
& $+22 \%$
& $-23 \%$
& $-16 \%$
& $\boldsymbol{-49\%}$ \\
\hline
\end{tabular}
\label{tab:benchmark_methods_runtimes}
\end{table}

Table~\ref{tab:scenario_assets_scalability} reports the average optimality gap $\epsilon^\star$ attained at convergence, the average number of clusters $|\boldsymbol{R}|$, and the average runtime of Algorithm~\ref{alg:dis_sto_MPC} over the simulated MPC iterations,
compared with full-scale MPC, as the optimality gap threshold $\epsilon^{\mathrm{thr}}$ and the number of assets and scenarios vary.
The reported results account for all the algorithmic steps involved in Algorithm~\ref{alg:dis_sto_MPC}.
For small problem instances, Algorithm~\ref{alg:dis_sto_MPC} may underperform relative to full-scale MPC (see the third row),
yet it reduces runtime by up to $53\%$ for $\epsilon^{\mathrm{thr}} = 1\%$ and $75\%$ for $\epsilon^{\mathrm{thr}} = 5\%$ as the problem size grows.
Crucially, it recovers tractability where full-scale MPC fails to compute the dispatch within the available control time ($5$~min).

Finally, Table~\ref{tab:benchmark_methods_runtimes} compares the average runtime across the simulated MPC iterations for full-scale MPC, traditional a priori TSA, marginal-cost-based TSA (without asset-scenario decomposition), consensus ADMM (without temporal aggregation), and Algorithm~\ref{alg:dis_sto_MPC},
across dispatch problems with $|\boldsymbol{N}| = |\boldsymbol{G}| = 50$ and increasing prediction horizon length and number of scenarios.
The TSA methods and ADMM employ the parameter settings adopted for Algorithm~\ref{alg:dis_sto_MPC}.
All methods are terminated upon reaching a $1\%$ optimality gap.
The a priori TSA method relies on energy prices as clustering features, while deriving objective function bounds through the same iterative procedure adopted for Algorithm~\ref{alg:dis_sto_MPC}.

As discussed in Subsection~\ref{sub:Results_MarginalCost}, the use of energy prices as clustering features results in poor aggregation performance, requiring several computationally intensive refinement iterations before convergence and ultimately resulting in higher computational effort than full-scale MPC.
In contrast, the proposed marginal-cost-based TSA method significantly reduces computational complexity relative to full-scale MPC by deriving temporally aggregated models that accurately retain the active constraints governing the dispatch problem, as illustrated in Subsection~\ref{sub:Results_MarginalCost}.
Similarly, consensus ADMM enables significant computational savings, albeit through asset-scenario decomposition rather than temporal aggregation. 
As expected, the proposed Algorithm~\ref{alg:dis_sto_MPC} provides the greatest computational benefits,
particularly as the temporal horizon and the number of scenarios in the dispatch problem increase,
due to the synergistic effect of simultaneously exploiting temporal aggregation and asset-scenario decomposition.

\section{Conclusion and Future Work}
\label{sec:Conclusion}
This paper addresses the real-time energy dispatch of a VPP formulated as a stochastic MPC problem.
Owing to multiple sources of uncertainty, nonlinear dynamics, and several coupling constraints, solving the problem at full scale rapidly becomes intractable as the dispatch horizon, number of scenarios, and number of DERs increase.
To overcome this limitation, we propose a novel control scheme integrating MPC with TSA and distributed optimization to simultaneously reduce the temporal, asset, and scenario dimensions of the problem.
Notably, the controller employs a closed-loop, a posteriori TSA method that exploits dual information, specifically marginal cost estimates,
to refine temporal aggregation, in contrast to traditional TSA methods that rely solely on input time series analysis.
Crucially, the controller embeds a rigorous performance guarantee in the form of theoretically validated bounds on its approximation error relative to full-scale MPC.
Numerical results show that the proposed controller reduces runtime by up to $53\%$ relative to full-scale MPC while maintaining over $99\%$ accuracy, with runtime reductions reaching $75\%$ when the accuracy requirement is relaxed to $95\%$.
Crucially, it recovers tractability where full-scale MPC fails to compute the dispatch within the available control time.

Future research directions include extending the proposed methodology to dispatch formulations beyond the considered stochastic QCQP problem \eqref{FS_SMPC}.
In particular, generalizing the proposed methodology to nonconvex formulations of the dispatch problem,
including unit commitment decisions, startup costs, reserve constraints, as well as network and power flow constraints,
constitutes a primary avenue for future research.


\appendix[Proof of Proposition~\ref{prop:main_result}]

This appendix presents the proof of Proposition~\ref{prop:main_result} from Subsection~\ref{subsec:TheoreticalResult}.

\begin{proof}
We first demonstrate that any $\boldsymbol{\bar{z}}$ obtained through \eqref{prop:main_result_eq1}--\eqref{prop:main_result_eq6} is feasible for the temporally aggregated model \eqref{AGG_SMPC}.

Using \eqref{prop:main_result_eq4}, we reformulate \eqref{AGG_SMPC:thr_ramp1} and \eqref{AGG_SMPC:thr_ramp2} as
\begin{align}
    & \bar{p}^\mathrm{th}_{g,\omega,r+1} - \bar{p}^\mathrm{th}_{g,\omega,r} \, K_r = \sum_{k \in \boldsymbol{K}_{r+1}} \frac{p^{\mathrm{th}}_{g,\omega,k}}{K_{r+1}} - \sum_{k \in \boldsymbol{K}_r} p^{\mathrm{th}}_{g,\omega,k} \nonumber\\
    & \leq  p^{\mathrm{th}}_{g,\omega,\min(\boldsymbol{K}_{r+1})} + R^\mathrm{th}_g \, \frac{K_{r+1} - 1}{2} - p^{\mathrm{th}}_{g,\omega,\max(\boldsymbol{K}_{r})} \nonumber\\
    & \leq R^\mathrm{th}_g + R^\mathrm{th}_g \, \frac{K_{r+1} - 1}{2}, \; \forall g, \forall \omega, \forall r \in \boldsymbol{R} \setminus \{R-1\}, \label{prop:agg_thr_ramp1}
\end{align}
and
\begin{align}
    & \bar{p}^\mathrm{th}_{g,\omega,r} - \bar{p}^\mathrm{th}_{g,\omega,r+1} \, K_{r+1} =  \sum_{k \in \boldsymbol{K}_r} \!\! \frac{p^{\mathrm{th}}_{g,\omega,k}}{K_r} - \sum_{k \in \boldsymbol{K}_{r+1}} \!\! p^{\mathrm{th}}_{g,\omega,k} \nonumber\\
    & \leq  p^{\mathrm{th}}_{g,\omega,\max(\boldsymbol{K}_{r})} + R^\mathrm{th}_g \, \frac{K_{r} - 1}{2} - p^{\mathrm{th}}_{g,\omega,\min(\boldsymbol{K}_{r+1})} \nonumber\\
    & \leq R^\mathrm{th}_g + R^\mathrm{th}_g \, \frac{K_r - 1}{2}, \; \forall g, \forall \omega, \forall r \in \boldsymbol{R} \setminus \{R-1\}, \label{prop:agg_thr_ramp2}
\end{align}
respectively.

Using \eqref{prop:main_result_eq4}, we reformulate \eqref{AGG_SMPC:thr_emission} as
\begin{align}
    \label{prop:agg_emissions}
    & \frac{\alpha_g}{K_r} \! \left( \sum_{k \in \boldsymbol{K}_r} \!\! p^\mathrm{th}_{g,\omega,k} \! \right)^2
    \!\!-\! \alpha_g \left( K_r \!-\! 1 \right) \! \left( \overline{P}^\mathrm{th}_g \right)^2
    \!\!+\! \frac{\beta_g}{K_r} \! \sum_{k \in \boldsymbol{K}_r} \!\! p^\mathrm{th}_{g,\omega,k} \nonumber \\
    & \leq
    \!\!\! \sum_{k \in \boldsymbol{K}_r} \frac{\alpha_g \! \left( p^\mathrm{th}_{g,\omega,k} \right)^2 \!\!+\! \beta_g \, p^\mathrm{th}_{g,\omega,k}}{K_r}
    \!\leq\! L^{\mathrm{CO}_2}_g, \; \forall g, \! \forall \omega, \! \forall r.
\end{align}

The full-scale constraints \eqref{model:thr_ramp1}, \eqref{model:thr_ramp2}, and \eqref{model:thr_emission}
imply the aggregated constraints \eqref{prop:agg_thr_ramp1}, \eqref{prop:agg_thr_ramp2}, and \eqref{prop:agg_emissions}, respectively,
since they are enforced over the set $\boldsymbol{K}_r$ rather than over the individual time steps $k$.
Similarly, constraints \eqref{FS_SMPC:bal}, \eqref{model:storage_soc}--\eqref{model:thr_bounds}, and \eqref{FS_SMPC:non_ant}
imply \eqref{AGG_SMPC:bal}--\eqref{AGG_SMPC:thr_bounds} and \eqref{AGG_SMPC:non_ant}, as demonstrated in \cite{santosuosso2025we}.
It follows that any $\boldsymbol{\bar{z}}$ obtained from a feasible solution $\boldsymbol{z}$ of the full-scale model \eqref{FS_SMPC} via \eqref{prop:main_result_eq1}--\eqref{prop:main_result_eq6} is feasible for \eqref{AGG_SMPC}.

Substituting $\boldsymbol{\bar{z}}$ into $\bar{F}(\boldsymbol{\bar{z}})$, as defined in \eqref{AGG_SMPC:obj}, yields
\begin{align}
\bar{F}(\boldsymbol{\bar{z}}) = 
& \sum_{\omega \in \boldsymbol{\Omega}} \phi_{\omega} \sum_{r \in \boldsymbol{R}} \left(
- \max_{k \in \boldsymbol{K}_r}\left\{\pi_{\omega,k}\right\} \sum_{k \in \boldsymbol{K}_r} e_{\omega,k} \right) \nonumber\\
& + \sum_{\omega \in \boldsymbol{\Omega}} \phi_{\omega} \sum_{r \in \boldsymbol{R}} \sum_{k \in \boldsymbol{K}_r} C^{\mathrm{ns}} e^{\mathrm{ns}}_{\omega,k} \nonumber\\
& + \sum_{\omega \in \boldsymbol{\Omega}} \phi_{\omega} \sum_{g \in \boldsymbol{G}} \sum_{r \in \boldsymbol{R}} \sum_{k \in \boldsymbol{K}_r} C^{\mathrm{th}}_g p^{\mathrm{th}}_{g,\omega,k} \Delta \nonumber\\
& + \sum_{\omega \in \boldsymbol{\Omega}} \phi_{\omega} \sum_{n \in \boldsymbol{N}} \sum_{r \in \boldsymbol{R}} \sum_{k \in \boldsymbol{K}_r} \left( C^{\mathrm{c}}_n p^{\mathrm{c}}_{n,\omega,k} + C^{\mathrm{d}}_n p^{\mathrm{d}}_{n,\omega,k} \right) \Delta \nonumber\\
& + \sum_{\omega \in \boldsymbol{\Omega}} \phi_{\omega} \sum_{n \in \boldsymbol{N}} \sum_{r \in \boldsymbol{R}} C^{\mathrm{ref}}_n \left( e^{\mathrm{s}}_{n,\omega,\min(\!\boldsymbol{K}_r\!)} - E^{\mathrm{ref}}_n \right)^2 \nonumber\\
\leq
& \sum_{\omega \in \boldsymbol{\Omega}} \phi_{\omega} \sum_{k \in \boldsymbol{K}} \left(
- \pi_{\omega,k} \, e_{\omega,k} \right)
+ \sum_{\omega \in \boldsymbol{\Omega}} \phi_{\omega} \sum_{k \in \boldsymbol{K}} C^{\mathrm{ns}} e^{\mathrm{ns}}_{\omega,k} \nonumber\\
& + \sum_{\omega \in \boldsymbol{\Omega}} \phi_{\omega} \sum_{g \in \boldsymbol{G}} \sum_{k \in \boldsymbol{K}} C^{\mathrm{th}}_g p^{\mathrm{th}}_{g,\omega,k} \Delta \nonumber\\
& + \sum_{\omega \in \boldsymbol{\Omega}} \phi_{\omega} \sum_{n \in \boldsymbol{N}} \sum_{k \in \boldsymbol{K}} \left( C^{\mathrm{c}}_n p^{\mathrm{c}}_{n,\omega,k} + C^{\mathrm{d}}_n p^{\mathrm{d}}_{n,\omega,k} \right) \Delta \nonumber\\
& + \sum_{\omega \in \boldsymbol{\Omega}} \!\phi_{\omega}\!\! \sum_{n \in \boldsymbol{N}} \sum_{r \in \boldsymbol{R}} \!C^{\mathrm{ref}}_n \!\!\left( e^{\mathrm{s}}_{n,\omega,\min(\!\boldsymbol{K}_r\!)} - E^{\mathrm{ref}}_n \right)^2,
\label{prop:main_result_obj1}
\end{align}
where the inequality follows directly from the fact that
\begin{equation*}
\max_{k \in \boldsymbol{K}_r}\{\pi_{\omega,k}\} \geq \pi_{\omega,k},
\; \forall \omega, \forall k \in \boldsymbol{K}_r, \forall r.
\end{equation*}

Furthermore, the last term in \eqref{FS_SMPC:obj} can be written as
\begin{multline}
\sum_{\omega \in \boldsymbol{\Omega}} \phi_{\omega} \sum_{n \in \boldsymbol{N}} \sum_{k \in \boldsymbol{K}} C^{\mathrm{ref}}_n \left(e^{\mathrm{s}}_{n,\omega,k} - E^{\mathrm{ref}}_n\right)^2 =\\
\sum_{\omega \in \boldsymbol{\Omega}} \phi_{\omega} \sum_{n \in \boldsymbol{N}} \sum_{r \in \boldsymbol{R}} \sum_{k \in \boldsymbol{K}_r} C^{\mathrm{ref}}_n \left(e^{\mathrm{s}}_{n,\omega,k} - E^{\mathrm{ref}}_n\right)^2.
\label{prop:main_result_obj2}
\end{multline}

In addition, the following holds:
\begin{multline}
\sum_{k \in \boldsymbol{K}_r} C^{\mathrm{ref}}_n \left(e^{\mathrm{s}}_{n,\omega,k} - E^{\mathrm{ref}}_n\right)^2 = C^{\mathrm{ref}}_n \left( e^{\mathrm{s}}_{n,\omega,\min(\!\boldsymbol{K}_r\!)} - E^{\mathrm{ref}}_n \right)^2\\
+ \sum_{k \in \boldsymbol{K}_r \setminus \left\{\min(\!\boldsymbol{K}_r\!)\right\}} C^{\mathrm{ref}}_n \left(e^{\mathrm{s}}_{n,\omega,k} - E^{\mathrm{ref}}_n\right)^2, \; \forall n, \forall \omega, \forall r.
\label{prop:main_result_obj3}
\end{multline}

From \eqref{prop:main_result_obj2} and \eqref{prop:main_result_obj3}, it follows that
\begin{multline}
\sum_{\omega \in \boldsymbol{\Omega}} \phi_{\omega} \sum_{n \in \boldsymbol{N}} \sum_{k \in \boldsymbol{K}} C^{\mathrm{ref}}_n \left(e^{\mathrm{s}}_{n,\omega,k} - E^{\mathrm{ref}}_n\right)^2 \geq\\
\sum_{\omega \in \boldsymbol{\Omega}} \!\phi_{\omega}\!\! \sum_{n \in \boldsymbol{N}} \sum_{r \in \boldsymbol{R}} \!C^{\mathrm{ref}}_n \!\!\left( e^{\mathrm{s}}_{n,\omega,\min(\!\boldsymbol{K}_r\!)} - E^{\mathrm{ref}}_n \right)^2.
\label{prop:main_result_obj4}
\end{multline}

Finally, combining \eqref{prop:main_result_obj1} and \eqref{prop:main_result_obj4} yields
\begin{align}
\bar{F}(\boldsymbol{\bar{z}}) \leq 
& \sum_{\omega \in \boldsymbol{\Omega}} \phi_{\omega} \sum_{k \in \boldsymbol{K}} \left(
- \pi_{\omega,k} \, e_{\omega,k} \right)
+ \sum_{\omega \in \boldsymbol{\Omega}} \phi_{\omega} \sum_{k \in \boldsymbol{K}} C^{\mathrm{ns}} e^{\mathrm{ns}}_{\omega,k} \nonumber\\
& + \sum_{\omega \in \boldsymbol{\Omega}} \phi_{\omega} \sum_{g \in \boldsymbol{G}} \sum_{k \in \boldsymbol{K}} C^{\mathrm{th}}_g p^{\mathrm{th}}_{g,\omega,k} \Delta \nonumber\\
& + \sum_{\omega \in \boldsymbol{\Omega}} \phi_{\omega} \sum_{n \in \boldsymbol{N}} \sum_{k \in \boldsymbol{K}} \left( C^{\mathrm{c}}_n p^{\mathrm{c}}_{n,\omega,k} + C^{\mathrm{d}}_n p^{\mathrm{d}}_{n,\omega,k} \right) \Delta \nonumber\\
& + \sum_{\omega \in \boldsymbol{\Omega}} \phi_{\omega} \sum_{n \in \boldsymbol{N}} \sum_{k \in \boldsymbol{K}} C^{\mathrm{ref}}_n \left(e^{\mathrm{s}}_{n,\omega,k} - E^{\mathrm{ref}}_n\right)^2 \nonumber\\
=
& \; F(\boldsymbol{z}),
\end{align}
where $F(\boldsymbol{z})$ is defined as in \eqref{FS_SMPC:obj}.
\end{proof}

\section*{Acknowledgments}
Funded by the European Union (ERC, NetZero-Opt, 101116212). Views and opinions expressed are however those of the authors only and do not necessarily reflect those of the European Union or the European Research Council. Neither the European Union nor the granting authority can be held responsible for them.

\end{document}